\newtheorem{theorem}{Theorem}[section]
\newtheorem{lemma}[theorem]{Lemma}
\newtheorem{proposition}[theorem]{Proposition}
\newtheorem{corollary}[theorem]{Corollary}
\newtheorem{remark}[theorem]{Remark}
\newtheorem{hypothesis}[theorem]{Hypothesis}
\newcommand{\Rmnum}[1]{\expandafter\@slowromancap\romannumeral #1@}
\def \Imm {\mathbb{I}}
\def \Rm {\mathbb{R}}
\def\C{\mathcal{C}}
\renewcommand{\div}{\nabla\cdot}
\newcommand{\bfe}{ {\bf e}}
\newcommand{\bfw}{ {\bf w}}
\newcommand{\tr}{ {\text{tr }}}
\newcommand{\cout}[1]{}
\def \Div {\text{Div}}
\def \i {\boldsymbol\iota}
\title{Reconstruction of complex-valued tensors in the Maxwell system from knowledge of internal magnetic fields}
\author{Chenxi Guo\thanks{Department of Applied Physics and Applied Mathematics, Columbia University,  New York NY, 10027; cg2597@columbia.edu} \and Guillaume Bal\thanks{Department of Applied Physics and Applied Mathematics, Columbia University,  New York NY, 10027; gb2030@columbia.edu} }
\begin{document}
\maketitle
\begin{abstract}This paper concerns the reconstruction of a complex-valued anisotropic tensor $\gamma=\sigma+\i\omega\varepsilon$ from knowledge of several internal magnetic fields $H$, where $H$ satisfies the anisotropic Maxwell system on a bounded domain with prescribed boundary conditions. We show that $\gamma$ can be uniquely reconstructed with a loss of two derivatives from errors in the acquisition of $H$. A minimum number of $6$ such functionals is sufficient to obtain a local reconstruction of $\gamma$. In the special case where $\gamma$ is close to a scalar tensor, boundary conditions are chosen by means of complex geometric optics (CGO) solutions. For arbitrary symmetric tensors $\gamma$, a Runge approximation property is used to obtain partial results. This problem finds applications in the medical imaging modalities Current Density Imaging and Magnetic Resonance Electrical Impedance Tomography.
\end{abstract}
\section{Introduction}
The electrical properties of biological tissues are characterized by the conductivity $\sigma$ and the permittivity $\epsilon$. We denote the admittivity as $\gamma=\sigma+\i\omega\epsilon$. Traditionally, the admittivity inside an object in sought from boundary measurements, such as in, e.g., Electrical Impedance Tomography (EIT). This leads to an inverse problem known as the Calder\'{o}n inverse problem. Extensive studies have been made on uniqueness and reconstructions methods for this inverse problem \cite{Sylvester1987,Uhlmann2009}. 
But the corresponding stability estimates are of logarithmic type, which results in a low resolution for the reconstructions, see \cite{Alessandrini1986}.
Moreover, well-known obstructions show that the anisotropic admittivities cannot be uniquely reconstructed from boundary measurements, see \cite{Kohn1984,Uhlmann2009}.
Inverse boundary value problems in electrodynamics have been studied in several papers. In Somersalo {\it et al} \cite{Somersalo1992}, the linearization about constant electromagnetic parameters is studied and a local uniqueness theorem was provided in \cite{Sun1992}. The global uniqueness result was due to Ola, P\"{a}iv\"{a}rinta and Somersalo in \cite{Ola1993} and was improved in \cite{Ola1996}. IBVP with partial data was studied by Calo, Ola and Salo in \cite{Caro2009}. An anisotropic case was studied by Kenig, Salo and Uhlmann in \cite{Kenig2011}.

To remedy the poor resolution of the aforementioned boundary value inverse problems, several recent imaging modalities, called coupled-physics modalities or hybrid imaging modalities aim to couple a high-resolution modality with a high-contrast modality. The inversion of the high-resolution modality from available boundary measurements typically provides internal functionals of the coefficients, which greatly improve the resolution of quantitative reconstructions.
For different types of internal functionals such as current densities and power densities, we refer the reader to \cite{Ammari2008,Bal2012c,Bal2010d,Bal2010,Ider1998,Kuchment2011a,Kuchment2011,Kwon2002a,Monard2012a,Nachman2009,Nachman2010}.

In this paper, we consider a hybrid inverse problem where, in addition to boundary data, we have access to the internal magnetic field $H$. Internal magnetic fields can be measured using a Magnetic Resonance Imaging (MRI) scanner; see \cite{Ider1997} for the experimental details. In \cite{Seo2012}, assuming that the magnetic field $H$ is measurable, Seo \emph{et al} gave a reconstruction for the conductivity in the isotropic case. This paper generalizes the reconstruction of an arbitrary (symmetric) complex-valued tensor and gives an explicit reconstruction procedure for $\gamma=\sigma+\i\omega\varepsilon$. The explicit reconstructions we propose require that all components of the magnetic field $H$ be measured. This is challenging in many practical settings as it requires a rotation of the domain being imaged or of the MRI scanner. The reconstruction of $\gamma$ from knowledge of only some components of $H$, ideally only one component for the most practical experimental setup, is open at present.

 
Let $X$ be a bounded domain with smooth boundary in $\Rm^3$. The smooth anisotropic electric permittivity, conductivity, and the constant isotropic magnetic permeability are respectively described by $\epsilon(x)$, $\sigma(x)$ and $\mu_0$, where $\epsilon(x)$, $\sigma(x)$ are tensors and $\mu_0$ is a constant scalar, known, coefficient. Let $E$ and $H$ denote the electric and magnetic fields inside the domain $X$ with a harmonic time dependence. Thus $E$ and $H$ solve the following system of Maxwell's equations: 
\begin{align}\label{eq:Maxwell} 
\left\{\begin{array}{lll}
\nabla\times E+\i\omega\mu_0H =0\\
\nabla\times H-\gamma E=0
\end{array}\right.
\end{align}
with the boundary condition 
\begin{align}\label{boundary con}
\nu\times E|_{\partial X}=f.
\end{align}
Here, $\gamma=\sigma+\i\omega\varepsilon$ in $X$, $\nu$ is the exterior unit normal vector on the boundary $\partial X$, with the frequency $\omega >0$ fixed. We assume that $\varepsilon(x)$ and $\sigma(x)$ satisfy the uniform ellipticity condition
\begin{align}\label{positive definite}
  \kappa^{-1}\|\xi\|^2\le \xi\cdot\varepsilon\xi \le \kappa\|\xi\|^2, \quad \kappa^{-1}\|\xi\|^2\le \xi\cdot\sigma\xi \le \kappa\|\xi\|^2,\quad \xi\in \Rm^n,
\end{align}
for some $\kappa>0$.


In the present work, we present an explicit (stable) reconstruction procedure for the anisotropic, complex-valued tensor $\gamma$ from knowledge of a set of (at least $6$) magnetic fields $H_j$ for $1\leq j\leq  J$, where $H_j$ solves \eqref{eq:Maxwell} with prescribed boundary conditions $f_j$. We propose sufficient conditions on $J$ and the choice of $\{f_j\}_{1\leq i\leq J}$ such that the reconstruction of $\gamma$ is unique and satisfies elliptic stability estimates.
 
\section{Statements of the main results}

We first introduce the solution space,
\begin{align*}
H_{\Div}^{s}(X):=\{u\in(H^s(X))^3|\Div(\nu\times u)\in H^{s-\frac{1}{2}}(\partial X)\}
\end{align*}
where $\Div$ denotes the surface divergence (see, e.g., \cite{Colton1992} for the definition). Let $TH_{\Div}^s(\partial X)$ denotes the Sobolev space through the tangential trace mapping acting on $H_{\Div}^s(X)$,
\begin{align*}
TH_{\Div}^{s}(\partial X)=\{f\in (H^{s}(\partial X))^3|\Div f\in H^{s}(\partial X)\}
\end{align*}
They are Hilbert spaces for the norms
\begin{align*}
\|u\|_{H_{\Div}^{s}(X)}&=\|u\|_{(H^s(X))^3}+\|\Div(\nu\times u)\|_{H^{s-\frac{1}{2}}(\partial X)}\\
\|f\|_{TH_{\Div}^{s}(\partial X)}&=\|f\|_{(H^s(\partial X))^3}+\|\Div(f)\|_{H^{s}(\partial X)}.
\end{align*}
The boundary value problem \eqref{eq:Maxwell} admits a unique solution $(E,H)\in H_{\Div}^k(X)\times H_{\Div}^k(X)$ with imposed boundary electric condition $\nu\times E|_{\partial X}=f\in TH_{\Div}^{k-\frac{1}{2}}(\partial X)$ except for a discrete set of magnetic resonance frequencies $\{\omega\}$ when $\sigma=0$; see \cite{Kenig2011}. The solution satisfies
\begin{align}
\|E\|_{H_{\Div}^s(X)}+\|H\|_{H_{\Div}^s(X)}\leq C\|f\|_{TH_{\Div}^{s-\frac{1}{2}}(\partial X)}
\end{align}
We assume that  $\omega$ is not a resonance frequency. 

\subsection{Main hypotheses}

We now list the main hypotheses, which allow us to set up our reconstruction formulas, which are {\em local} in nature: the reconstruction of $\gamma$ at $x_0\in X$ requires the knowledge of $\{H_j(x)\}_{1\leq j\leq J}$ for $x$ only in the vicinity of $x_0$.

The first hypothesis requires the existence of a basis of electric fields which satisfy \eqref{eq:Maxwell}.
\begin{hypothesis} \label{main hypo}
Given Maxwell's equations in form of \eqref{eq:Maxwell} with $\varepsilon$ and $\sigma$ uniformly elliptic, there exist $(f_1,f_2,f_3)\in TH_{\Div}^{\frac{1}{2}}(\partial X)^3$ and a sub-domain $X_0\subset X$, such that the corresponding solutions ${E_1,E_2,E_3}$ satisfy
\begin{align*}
	\inf_{x\in X_0} |\det (E_1,E_2,E_3) \ge c_0| >0.
\end{align*}
\end{hypothesis}
Assuming that ${E_1,E_2,E_3}$ solutions to \eqref{eq:Maxwell} satisfy the Hypothesis \ref{main hypo}, we consider additional solutions $\{E_{3+k}\}^m_{k=1}$ and obtain the linear dependence relations for each additional solution, 
\begin{align}
E_{3+k}=\sum_{i=1}^{3}\lambda^k_i E_i, \quad  1\leq k\leq m.
\label{ln dep}
\end{align}
As shown in \cite{Bal2012e,Guo2012a}, the coefficients $\lambda^k_i$ can be computed as follows:
\begin{align*}
\lambda^k_i= - \frac{\det ( E_1,\overbrace{ E_{3+k}}^i, E_3)}{\det ( E_1,E_2, E_3)}= -\frac{\det (\nabla\times H_1,\overbrace{\nabla\times H_{3+k}}^i,  \nabla\times H_3)}{\det (\nabla\times H_1,\nabla\times H_2, \nabla\times H_3)},
\end{align*}
Therefore these coefficients are computable from magnetic fields. The reconstruction procedures will make use of the matrices $Z_k$ defined by
\begin{align}
    Z_k=\left[Z_{k,1} ,Z_{k,2} ,Z_{k,3}\right],\quad \text{where }\quad Z_{k,i}=\nabla \lambda^k_i, \quad 1\leq k\leq m.
\label{Y Z}
\end{align}
These matrices are also uniquely determined from the known magnetic fields.

The next hypothesis which gives a sufficient condition for a local reconstruction of the anisotropic tensor $\gamma$, is that a sufficiently large number of matrices $Z_k$ satisfies a full-rank condition.
\begin{hypothesis}\label{2 hypo}
Assume that Hypothesis \ref{main hypo} holds for $(E_1,E_2,E_3)$ over $X_0\subset X$. We denote $Y$ as the matrix with columns $Y_1,Y_2,Y_3$, where $Y_i=\nabla\times H_i$, $1\leq i\leq 3$. Then there exist $E_{1},\cdots,E_{J=3+m}$ solutions of Maxwell equations \eqref{eq:Maxwell} and some $X' \subseteq X_0$ such that the space,
\begin{align}\label{W full}
\mathcal{W}=\{(\Omega Z_k Y^T)^{sym}|\Omega\in A_3(\Rm),1\leq k \leq m\}.
\end{align}
has full rank in $S_3(\mathbb{C})$ for all $x\in X'$, where $S_3$ and $A_3$ denote the space of $3\times 3$ symmetric and anti-symmetric matrices, respectively.
\end{hypothesis}

\begin{remark}
\label{rem:hypotheses}Hypotheses \ref{main hypo} and \ref{2 hypo} can be both fulfilled for well-chosen boundary conditions $\{f_i\}_{1\leq i\leq 6}$ in \eqref{boundary con} when $\gamma$ is close to a constant tensor $\gamma_0$. The proof of such a statement can be found in Section \ref{sec const}. For a arbitrary tensor $\gamma$, Hypothesis \ref{main hypo} can be fulfilled locally. If we suppose additionally that $\gamma$ is the $C^{1,\alpha}$ vicinity of $\gamma(x_0)$ on some open domain of $x_0$, then Hypothesis \ref{2 hypo} also holds locally, see Section \ref{arbitrary tensor}.
\end{remark}

\subsection{Uniqueness and stability results}
We denote by $M_n(\mathbb{C})$ the space of $n\times n$ matrices with inner product $\langle A,B\rangle:=\tr(A^*B)$. We assume that Hypotheses \ref{main hypo} and \ref{2 hypo} hold over some $X_0\subset X$ with $J=3+m$ solutions $(E_1,\cdots,E_{3+m})$. In particular, the linear space $\mathcal{W}\subset S_3(\mathbb{C})$ defined in \eqref{W full} is of full rank in $S_3(\mathbb{C})$. We will see that the inner products of $(\gamma^{-1})^*$ with all elements in $\mathcal{W}$ can be calculated from knowledge of  $(H_1,\cdots,H_{3+m})$. Together with the fact that $\mathcal{W}$ is also constructed by the measurements, $\gamma$ can be completely determined by $H_1,\cdots,H_{3+m}$. The reconstruction formulas can be found in Theorem \ref{reconstruct formulas}. This algorithm leads to a unique and stable reconstruction in the sense of the following theorem. 
\begin{theorem}\label{stability}
Suppose that Hypotheses \ref{main hypo} and \ref{2 hypo} hold over some $X_0\subset X$ for two $3+m$-tuples $\{E_i\}_{i=1}^{3+m}$ and $\{E_i\}_{i=1}^{3+m}$, solutions of the Maxwell system \eqref{eq:Maxwell} with the complex tensors $\gamma$ and $\gamma'$ satisfying the uniform ellipticity condition \eqref{positive definite}. Then $\gamma$ can be uniquely reconstructed in $X_0$ with the following stability estimate,
\begin{align}
	\|\tilde\gamma - \tilde\gamma'\|_{W^{s,\infty}(X_0)}\le C \sum_{i=1}^{3+m} \|H_i-H'_i\|_{W^{s+2,\infty}(X)}.
	\label{eq:stability}
\end{align}
 for any integer $s>0$. If $\gamma$ is isotropic or in the vicinity of a constant tensor $\gamma_0$, then $\gamma$ can be reconstructed with $6$ measurements and the above estimate holds on $X_0=X$.
\end{theorem}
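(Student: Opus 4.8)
The plan is to convert the linear-dependence data $\lambda^k_i$ into pointwise linear equations for the symmetric matrix $\gamma^{-1}$ and then to solve that system using the full-rank Hypothesis \ref{2 hypo}. First I would record that the second equation of \eqref{eq:Maxwell} gives $E_i=\gamma^{-1}Y_i$ with $Y_i=\nabla\times H_i$ directly available from the data, and that $\det Y=\det\gamma\,\det(E_1,E_2,E_3)$ is bounded below by Hypothesis \ref{main hypo} and the ellipticity \eqref{positive definite}, so that $Y^{-1}$, the coefficients $\lambda^k_i$ and the matrices $Z_k$ of \eqref{Y Z} are all well defined with uniform control on $X_0$.

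The heart of the argument is an algebraic identity obtained by differentiating \eqref{ln dep}: writing $\nabla\times E_{3+k}=\sum_i(\nabla\lambda^k_i)\times E_i+\sum_i\lambda^k_i\,\nabla\times E_i$ and using the first Maxwell equation $\nabla\times E_j=-\i\omega\mu_0 H_j$ to eliminate every curl, one gets
\begin{align*}
\sum_{i=1}^{3}(\nabla\lambda^k_i)\times E_i=-\i\omega\mu_0\,W_k,\qquad W_k:=H_{3+k}-\sum_{i=1}^{3}\lambda^k_i H_i,
\end{align*}
where $W_k$ is computable from the measured fields. I would then pair this with an arbitrary constant vector $\bomega$, write $\bomega\times(\,\cdot\,)$ as multiplication by the skew matrix $\Omega\in A_3(\Rm)$, and use the cyclic invariance of the scalar triple product together with $(\gamma^{-1})^T=\gamma^{-1}$ to rewrite the left-hand side as $\sum_i\bomega\cdot(Z_{k,i}\times\gamma^{-1}Y_i)=\tr(\gamma^{-1}\Omega Z_kY^T)=\tr(\gamma^{-1}(\Omega Z_kY^T)^{sym})$. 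This produces, for every $k$ and every $\Omega\in A_3(\Rm)$,
\begin{align*}
\langle(\gamma^{-1})^*,(\Omega Z_kY^T)^{sym}\rangle=-\i\omega\mu_0\,\bomega\cdot W_k,
\end{align*}
i.e. the inner product of $(\gamma^{-1})^*$ against every element of the space $\mathcal W$ of \eqref{W full} is determined by the data, as announced in the text preceding the theorem.

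Next I would invoke Hypothesis \ref{2 hypo}: since $\mathcal W(x)=S_3(\Cm)$ for $x\in X'$, one can pick from $\mathcal W$ a basis $W^{(1)},\dots,W^{(6)}$ of $S_3(\Cm)$, and the six known numbers $\langle(\gamma^{-1})^*,W^{(\ell)}\rangle$ determine the six independent entries of the symmetric matrix $\gamma^{-1}$ via an invertible $6\times 6$ linear system; inverting it and using \eqref{positive definite} to pass from $\gamma^{-1}$ back to $\gamma$ gives the pointwise reconstruction (the explicit formulas being those of Theorem \ref{reconstruct formulas}). For the stability estimate I would track the derivative budget: $Y_i$ costs one derivative of $H_i$; the $\lambda^k_i$, being ratios of $3\times 3$ determinants of the $Y_i$ with denominator uniformly bounded away from zero, cost one derivative; $Z_k=[\nabla\lambda^k_1,\nabla\lambda^k_2,\nabla\lambda^k_3]$ costs two; and $W_k$ costs one. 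Hence both the matrix and the right-hand side of the linear system depend, in a locally Lipschitz way, on at most two derivatives of the $H_i$, and since $W^{s,\infty}(X_0)$ is a Banach algebra for integer $s\ge 0$ and the system matrix has a uniformly bounded inverse on $X_0$, the perturbation identity $\gamma^{-1}-(\gamma')^{-1}=M^{-1}(b-b')+(M^{-1}-(M')^{-1})b'$ yields \eqref{eq:stability}. Finally, for $\gamma$ isotropic or in a $C^{1,\alpha}$-neighbourhood of a constant tensor $\gamma_0$, Hypotheses \ref{main hypo} and \ref{2 hypo} can be arranged globally with $J=3+m=6$ well-chosen boundary conditions (via CGO solutions in the near-constant case, Section \ref{sec const}), so that $X'=X_0=X$ and the argument applies with six measurements.

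The step I expect to be the main obstacle is the stability part: propagating the precise two-derivative loss through the rational maps $H\mapsto\lambda\mapsto(Z,W)$ while keeping all constants uniform, and — above all — extracting from the qualitative full-rank Hypothesis \ref{2 hypo} a quantitative lower bound on the conditioning of the $6\times 6$ system, which is what upgrades uniqueness to a genuine elliptic stability estimate. By contrast, the algebraic identity itself is a short computation built directly on the two Maxwell equations.
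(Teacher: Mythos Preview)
Your proposal is correct and follows essentially the same route as the paper: the key identity you obtain by taking the curl of \eqref{ln dep} and dotting with $\bomega$ is exactly the paper's identity $\langle\bar\gamma^{-1},(\Omega Z_kY^T)^{sym}\rangle=\tr(\Omega M_k^T)$, derived there via exterior calculus ($d$, $\wedge$, $\star$) rather than your vector-calculus language ($\nabla\times$, $\times$, triple product), which in three dimensions are equivalent formalisms. The paper then invokes Hypothesis \ref{2 hypo} and the derivative count exactly as you do, and in fact disposes of the stability estimate in one sentence (``clear by inspection of the reconstruction procedure''), so your worry about extracting a quantitative conditioning bound is more scrupulous than the paper itself.
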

\begin{remark}
\label{rem:cgo}For the case $\gamma$ is isotropic, it can be reconstructed via a redundant elliptic equation which is based on the construction of Complex Geometrical Optics solutions(CGOs). The algorithms will be given in Section \ref{CGO iso}.
\end{remark}

\paragraph{Outline} The rest of the paper is organized as follows. Section \ref{reconstruct formulas} derives the reconstruction algorithms for an arbitrary anisotropic tensor. Section \ref{sec const} gives the reconstruction formulas and the proof of Hypothesis \ref{main hypo} and \ref{2 hypo} for a constant tensor. Section \ref{CGO iso} derives the global reconstruction formulas of an isotropic tensor by constructing CGO solutions. Section \ref{sec runge} covers the unique continuation property(UCP) and Runge approximation property for an anisotropic Maxwell system. Section \ref{arbitrary tensor} discusses the question of local reconstructibility of a more general tensor.
\section{Reconstruction approaches} 
\subsection{Preliminary}
\paragraph{Exterior calculus and notations:}Throughout this paper, we will identify vector fields with one-forms via the identification $\bfe_i\equiv \bfe^i$ where $\{\bfe_i\}_{i=1}^n$ and $\{\bfe^i\}_{i=1}^n$ denote bases of $\Rm^n$ and its dual, respectively. In this setting, if $V = V^i \bfe_i$ is a vector field, $d V$ denotes the two-vector field 
\begin{align*}
    dV = \sum_{1\le i<j \le n} (\partial_i V^j - \partial_j V^i) \bfe_i\wedge\bfe_j.
\end{align*}
A two-vector field can be paired with two other vector fields via the formula 
\begin{align*}
    A \wedge B (C,D) = (A\cdot C)(B\cdot D) - (A\cdot D) (B\cdot C), 
\end{align*}
Note also the following well-known identities for $f$ a smooth function and $V$ a smooth vector field, rewritten with the notation above:
\begin{align*}
    d (\nabla f) &= 0, \qquad f\in \C^2(X),    \\
    d (fV) &= \nabla f \wedge V + fdV. 
\end{align*}
\paragraph{Hodge star operator:}For $x\in \Rm^n$, let $\{\bfe_1,\cdots,\bfe_n\}$ and $\{\bfe^1,\cdots,\bfe^n\}$ denote the canonical bases of $T_x\Rm^n$ and its dual $T^*_x\Rm^n$. The Hodge star operator on an $l$-form is defined as the linear extension of
\begin{align}
\star(\bfe^{\alpha_1}\wedge\cdots\wedge \bfe^{\alpha_l})|_x=(\bfe^{\beta_1}\wedge\cdots\wedge \bfe^{\beta_{n-l}})|_x
\end{align}
where $(\beta_1,\cdots,\beta_{n-l})\in \{1,\cdots,n\}^{n-l}$ is chosen such that 
\begin{align}
\{\bfe^{\alpha_1},\cdots,\bfe^{\alpha_l},\bfe^{\beta_1},\cdots,\bfe^{\beta_{n-l}}\}
\end{align}
is a positive base of $T^*_x\Rm^n$. For a $l$-form $\eta$, the Hodge star operator follows,
\begin{align}
\star\star\eta=(-1)^{l(n-l)}\eta
\end{align}
\subsection{Reconstruction algorithms}\label{reconstruct formulas}
For some matrices $A,B \in M_n(\mathbb{C})$, we denote their product $A:B$ by,
\begin{align}
A:B=\tr(AB^T)=\tr(A^TB)
\end{align}
Starting with $3$ solutions $(E_1,E_2,E_3)$ satisfying Hypothesis \ref{main hypo}, we then pick additional magnetic fields $H_{3+k}$. The corresponding electric fields $E_{3+k}$ and $E_1, E_2,E_3$ satisfy the linear dependence relations defined in \eqref{ln dep}. We recall the $3\times 3$ matrices, 
\begin{align*}
Y=[Y_1,Y_2,Y_3], \quad Y_i=\nabla\times H_i
\end{align*}
$\nabla\times H_{3+k}$ satisfies the same linear dependence with $Y_1,Y_2,Y_3$ as $E_{n+k}$, $E_1,\cdots,E_n$. Thus $\lambda^k_i$ defined in \eqref{ln dep} are computable from only knowledge of the magnetic fields(we use implicit summation notation),
\begin{align}\label{ln dep H}
\nabla\times H_{3+k}=\lambda^k_i\nabla\times H_i;\quad \nabla\lambda^k_i:= -\nabla \frac{\det (\nabla\times H_1,\overbrace{\nabla\times H_{3+k}}^i,\nabla\times H_3)}{\det Y},
\end{align}
Now we construct the subspace $\mathcal{W}$ of $S_3(\mathbb{C})$ as denoted in Hypothesis \ref{2 hypo},\begin{align}
\mathcal{W}=\{(\Omega Z_k Y^T)^{sym}|\Omega\in A_3(\Rm),1\leq k \leq m\}.
\end{align}
Denote $(\bfw_1,\cdots,\bfw_6)$ as the natural basis of the $6$ dimensional space $S_3({\mathbb{C}})$. Given $6$ vectors $W_1,\cdots,W_{6}$ in $\mathcal{W}$, for any vector $W\in S_3({\mathbb{C}})$, we define a $(7,1)$ type tensor $\mathcal{N}$ dealing with inner products $\langle W,W_p\rangle$:
\begin{align}\label{cross inner}
\mathcal{N}(W,W_1,\cdots,W_6):=\sum_{p=1}^{6}\langle W,W_p\rangle\left|
\begin{array}{ccc}
\langle W_1,\bfw_1\rangle  & \ldots &\langle W_1,\bfw_6\rangle \\
\vdots      &   & \vdots\\
\overbrace{\bfw_1}^i &\ldots &\overbrace{\bfw_6}^i \\
\vdots      &  & \vdots\\
\langle W_{6},\bfw_1\rangle & \ldots & \langle W_6,\bfw_6\rangle
\end{array}
\right|=F(W_1,\cdots,W_6)W
\end{align}
where $F(W_1,\cdots,W_6):=\det\{\langle W_p,\bfw_q\rangle\}_{1\le p,q\le 6}$. Obviously, $\det\{\langle W_p,\bfw_q\rangle\}_{1\le p,q\le 6}W=0$ if and only if $W_1,\cdots,W_{6}$ are linearly dependent. In other words, $\mathcal{N}(W,W_1,\cdots,W_6)=0$ never vanishes if $W_1,\cdots,W_{6}$ are linearly independent and $W\neq 0$.

We summarize the reconstruction algorithms in the following theorem and show that $\gamma$ can be algebraically reconstructed via Gram-Schmidt procedure and the explicit expression \eqref{cross inner}.
\begin{theorem}\label{explicit rec}
Assume that Hypothesis \ref{main hypo} and \ref{2 hypo} are fulfilled on a sub-domain $X_0\subset X$, then $\gamma$ can be reconstructed on $X_0$ as follows
\begin{align}
\gamma=\overline\det\{\langle W_p,\bfw_q\rangle\}_{1\le p,q\le 6}(\mathcal{N}^{-1}(\bar\gamma^{-1},W_1,\cdots,W_6))^*.
\end{align}
Here, $(\bfw_1,\cdots,\bfw_6)$ denotes the natural basis of $S_3({\mathbb{C}})$ and $\{W_p\}_{1\le p\leq 6}$ are linearly independent matrices, which can be constructed from the matrices $\{(\Omega Z_k Y^T)^{sym}\}_{1\leq k \leq m}$ in $\mathcal{W}$ by the Gram-Schmidt procedure. The inner product of $\bar\gamma^{-1}$ with matrices in $\mathcal{W}$ are given by:
\begin{align}
\langle \bar\gamma^{-1},(\Omega Z_k Y^T)^{sym}\rangle=\tr(\Omega M^T_k),
\end{align}
where $M_k:=\frac{\i}{2}\omega\mu_0\star(H_{3+k}-\lambda_i^k H_i)(\bfe_p,\bfe_q)\bfe_q\otimes\bfe_p$ for $1\leq k\leq m$ and $\star$ denotes the Hodge star operator. Moreover, for any other $\gamma'$ satisfying \eqref{positive definite} and Maxwell system \eqref{eq:Maxwell}, we have the following stability estimate,
\begin{align}
	\|\tilde\gamma - \tilde\gamma'\|_{W^{s,\infty}(X_0)}\le C \sum_{i=1}^{3+m} \|H_i-H'_i\|_{W^{s+2,\infty}(X)},
\end{align}
where $C$ is a constant and $s$ is any integer.
\end{theorem}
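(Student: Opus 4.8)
\medskip
\noindent\emph{Proof strategy.} The plan is to produce a linear system for the unknown inverse tensor whose coefficients are built from the measured magnetic fields, to recognize that this system probes exactly the space $\mathcal{W}$ of \eqref{W full}, and then to invert. \textbf{Step 1 (master identity).} By Hypothesis~\ref{main hypo} the dependence relation \eqref{ln dep}, $E_{3+k}=\lambda^k_iE_i$ (summation on $i$), holds on $X_0$. Applying the exterior derivative via $d(fV)=\nabla f\wedge V+f\,dV$ gives $dE_{3+k}=d\lambda^k_i\wedge E_i+\lambda^k_i\,dE_i$; since the first equation of \eqref{eq:Maxwell} reads $\star\,dE_j=\nabla\times E_j=-\i\omega\mu_0H_j$, i.e. $dE_j=-\i\omega\mu_0\star H_j$, regrouping produces the $2$-form identity
\begin{align*}
\star\big(H_{3+k}-\lambda^k_iH_i\big)=\tfrac{\i}{\omega\mu_0}\,d\lambda^k_i\wedge E_i .
\end{align*}
Evaluating on $(\bfe_p,\bfe_q)$ and inserting $E_i=\gamma^{-1}Y_i$ from the second equation of \eqref{eq:Maxwell} — so that, with $\mathcal{E}=[E_1,E_2,E_3]=\gamma^{-1}Y$ and $\gamma^T=\gamma$, one has $\mathcal{E}^T=Y^T\gamma^{-1}$ — a direct computation (tracking the conventions for $d$, $\star$ and the $2$-form/matrix identification) identifies $M_k$ with the antisymmetric part $(Z_kY^T\gamma^{-1})^{asym}$; crucially $M_k$ is computable from the magnetic fields through \eqref{ln dep H} and \eqref{Y Z}, and $\gamma$ enters only linearly, via $\gamma^{-1}$.

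\smallskip
\noindent\textbf{Step 2 (inner products and inversion).} Pairing $M_k$ with an arbitrary $\Omega\in A_3(\Rm)$ and using cyclicity of the trace, $\Omega^T=-\Omega$, $\gamma^T=\gamma$ and the vanishing of $\tr(\mathrm{sym}\cdot\mathrm{antisym})$, one collapses $\tr(\Omega M^T_k)$ to $\tr\!\big(\gamma^{-1}(\Omega Z_kY^T)^{sym}\big)$, i.e. (since $(\bar\gamma^{-1})^*=\gamma^{-1}$ as $\gamma$ is symmetric, and $\langle A,B\rangle=\tr(A^*B)$)
\begin{align*}
\big\langle\bar\gamma^{-1},(\Omega Z_kY^T)^{sym}\big\rangle=\tr(\Omega M^T_k),\qquad\Omega\in A_3(\Rm),\ 1\le k\le m,
\end{align*}
whose right-hand side is known; thus the inner product of $\bar\gamma^{-1}$ with every element of $\mathcal{W}$ is determined. (Symmetry of $\gamma$ is essential here: only the $S_3(\Cm)$-component of $\gamma^{-1}$ is visible to such pairings, which is exactly why $\mathcal{W}\subset S_3(\Cm)$ is the right space.) By Hypothesis~\ref{2 hypo}, $\mathcal{W}$ is of full rank in the $6$-dimensional $S_3(\Cm)$, so Gram--Schmidt on the explicit generators $\{(\Omega Z_kY^T)^{sym}\}$ extracts six independent $W_1,\dots,W_6\in\mathcal{W}$ together with the values $\langle\bar\gamma^{-1},W_p\rangle$; since $\bar\gamma^{-1}\in S_3(\Cm)$ and the bilinear pairing $(A,W)\mapsto\tr(AW)$ is non-degenerate on $S_3(\Cm)$, these six linear equations fix $\bar\gamma^{-1}$ uniquely, Cramer's rule being precisely \eqref{cross inner}, and undoing conjugation and matrix inversion yields the reconstruction formula of the statement (this gives existence, uniqueness and the explicit formula at once).

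\smallskip
\noindent\textbf{Step 3 (stability and uniqueness).} The construction realizes $\gamma$ (equivalently $\tilde\gamma$) as one fixed rational map $\Phi$ of $(H_i,\nabla H_i,\nabla^2H_i)_{i=1}^{3+m}$: the $\lambda^k_i$ are ratios of determinants of the $\nabla\times H_i$ (one derivative), the columns of $Z_k$ are their gradients (two derivatives), $M_k$ involves $H_i$ and the $\lambda^k_i$ (one derivative), and the closing step is algebraic. All denominators that occur — $\det Y=\det\gamma\,\det(E_1,E_2,E_3)$, the Gram determinant $\det\{\langle W_p,\bfw_q\rangle\}$, and $\det\bar\gamma^{-1}$ — are bounded away from $0$ on $X_0$ by Hypotheses~\ref{main hypo}--\ref{2 hypo} and the ellipticity \eqref{positive definite}, so $\Phi$ is $C^\infty$ on a fixed neighborhood of the admissible data. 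A standard Moser-type composition estimate $\|\Phi(u)-\Phi(v)\|_{W^{s,\infty}}\le C\,\|u-v\|_{W^{s,\infty}}$, with $C$ depending on $\|u\|_{W^{s,\infty}},\|v\|_{W^{s,\infty}}$, applied to $u=(H_i,\nabla H_i,\nabla^2H_i)$ and $v=(H'_i,\nabla H'_i,\nabla^2H'_i)$ (so $\|u-v\|_{W^{s,\infty}}\lesssim\sum_i\|H_i-H'_i\|_{W^{s+2,\infty}}$), then yields \eqref{eq:stability}; taking $H'_i=H_i$ recovers uniqueness, and the loss of exactly two derivatives is the two derivatives of $H$ sitting inside $Z_k=[\nabla\lambda^k_i]$.

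\smallskip
\noindent The main obstacle is not conceptual — Steps~2--3 are short once Hypothesis~\ref{2 hypo} is granted — but bookkeeping: in Step~1 one must reconcile the conventions for $d$, $\star$ and the $2$-form/matrix correspondence so as to land on the precise identity $M_k=(Z_kY^T\gamma^{-1})^{asym}$ with the stated constant, and in Step~3 one must verify that the several denominators stay uniformly bounded away from zero, which is where one uses that $H'$ is a small perturbation of $H$ so that $\Phi$ is uniformly smooth along the segment joining the two data sets.
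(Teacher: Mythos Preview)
Your proposal is correct and follows essentially the same route as the paper: differentiate the linear dependence \eqref{ln dep}, use the first Maxwell equation to replace $dE_j$ by $-\i\omega\mu_0\star H_j$, evaluate the resulting $2$-form on $(\bfe_p,\bfe_q)$ to obtain the antisymmetric matrix identity for $\gamma^{-1}YZ_k^T$, and then pair with $\Omega\in A_3(\Rm)$ to extract $\langle\bar\gamma^{-1},(\Omega Z_kY^T)^{sym}\rangle$. Your stability discussion (smooth rational map in the $2$-jet of $H$, denominators controlled by the hypotheses) is a slightly more explicit packaging of what the paper dispatches with ``clear by inspection,'' and the sign/constant bookkeeping you flag in Step~1 is indeed the only delicate point --- the paper's own computation has a sign that differs from yours at the intermediate $2$-form identity, but both land on the same final formula once $M_k$ is defined.
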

\begin{proof}
We rewrite the time-harmonic Maxwell's equations \eqref{eq:Maxwell} in terms of differential forms, 
\begin{align}\label{eq:max diff}
\left\{\begin{array}{lll}
\star dE_i=-\i\omega\mu_0H_i\\
\star dH_i=\gamma E_i.
\end{array}\right.
\end{align}
Here $d$ is the exterior derivative and $\star$ denotes the Hodge star operator. Applying the exterior derivative $d$ to \eqref{ln dep} gives,
\begin{align}
d(\sum_{i=1}^3\lambda_i^kE_i-E_{3+k})=0.
\end{align}
Using the formula $d(fV)=df\wedge V+fdV$ for a scalar function $f$ and  a vector field $V$, we have
\begin{align}
d\lambda_i^k\wedge E_i+\lambda_i^k dE_i=dE_{3+k}.
\end{align}
Applying the Hodge operator to \eqref{eq:max diff} and using the fact that $E_i=\gamma^{-1}\nabla\times H_i$, we obtain the following equation,
\begin{align}
d\lambda_i^k\wedge \gamma^{-1}\nabla\times H_i=\i\omega\mu_0\star(H_{3+k}-\lambda_i^k H_i).
\end{align}
By applying two vector fields $\bfe_p$, $\bfe_p$, $1\le p< q\le n$ to the above 2-form, we obtain,
\begin{align}
(\nabla\lambda_i^k\cdot\bfe_p)(\gamma^{-1}Y_i\cdot\bfe_q)-(\nabla\lambda_i^k\cdot\bfe_q)(\gamma^{-1}Y_i\cdot\bfe_p)=\i\omega\mu_0\star(H_{3+k}-\lambda_i^k H_i)(\bfe_p,\bfe_q)
\end{align}
where $Y_i=\nabla\times H_i$ for $1\leq i\leq 3$. The above equation reads explicitly,
\begin{align}
(\gamma^{-1}Y)_{qi}Z_{k,pi}-Z_{k,qi}(\gamma^{-1}Y)_{pi}=\i\omega\mu_0\star(H_{3+k}-\lambda_i^k H_i)(\bfe_p,\bfe_q)\end{align}
which amounts to the following matrix equation,
\begin{align}
\gamma^{-1}YZ^T-(\gamma^{-1}YZ^T)^T=\i\omega\mu_0\star(H_{3+k}-\lambda_i^k H_i)(\bfe_p,\bfe_q)\bfe_q\otimes\bfe_p.
\end{align}
Since $\gamma$ is symmetric, we pick $\Omega\in A_3(\Rm)$ and calculate its '$:$' product with both sides of the above equation, 
\begin{align}
\langle \bar\gamma^{-1},(\Omega Z_k Y^T)^{sym}\rangle=\gamma^{-1}:(\Omega Z_k Y^T)^{sym}=\tr(\Omega M^T_k)
\end{align}
where $M_k:=\frac{\i}{2}\omega\mu_0\star(H_{3+k}-\lambda_i^k H_i)(\bfe_p,\bfe_q)\bfe_q\otimes\bfe_p$. The stability estimate is clear by inspection of the reconstruction procedure. Two derivatives on $\{H_k\}_{1\leq k\leq 3+m}$ are taken in the reconstructions of the matrices $Z_k$ and one derivative is taken for the reconstructiongs of $M_k$. The Gram-Schmidt procedure preserves errors in the uniform norm. Therefore, we have a total loss of $2$ derivatives in the reconstruction of $\gamma$ as indicated in Theorem \ref{stability}.
\end{proof}
\subsection{Global reconstructions close to constant tensor}\label{sec const}
In this section, we assume that $\gamma$ is in the vicinity of a diagonalizable constant tensor $\gamma_0$. We will construct special solutions, namely plane waves, of the Maxwell's equations \eqref{eq:Maxwell} and demonstrate that Hypothesis \ref{main hypo} and \ref{2 hypo} are fulfilled with these solutions. The following lemma shows that Hypothesis \ref{main hypo} is satisfied in the homogeneous media. 
\begin{lemma}\label{basic sol}
Suppose that the admittivity $\gamma$ is sufficiently close to a constant tensor $\gamma_0$, where the real and imaginary parts of $\gamma_0$ satisfy the uniform ellipticity condition \eqref{positive definite}. Then Hypothesis \ref{main hypo} holds on $X$.
\end{lemma}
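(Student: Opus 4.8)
The plan is to exhibit, for $\gamma = \gamma_0$ a diagonalizable constant tensor, three explicit plane-wave solutions of \eqref{eq:Maxwell} whose electric fields are pointwise linearly independent, and then argue by a perturbation/continuity argument that the same property survives when $\gamma$ is a small perturbation of $\gamma_0$. First I would diagonalize $\gamma_0$: write $\gamma_0 = P D P^{-1}$ (or, since we will want to respect the symmetry, $\gamma_0 = R D R^T$ with $R$ orthogonal if $\gamma_0$ is symmetric). After a linear change of coordinates adapted to this diagonalization, Maxwell's system with constant coefficients becomes an anisotropic Helmholtz-type system, and one looks for solutions of the form $E(x) = \be\, e^{\i k \cdot x}$, $H(x) = \bh\, e^{\i k \cdot x}$ with constant vectors $\be, \bh \in \Cm^3$ and wave vector $k \in \Cm^3$. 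Substituting into \eqref{eq:Maxwell} gives the algebraic relations $k \times \be + \omega \mu_0 \bh = 0$ and $k \times \bh - \frac{1}{\i}\gamma_0 \be = 0$ (up to the $\i$ bookkeeping), which combine to the dispersion relation $k \times (k \times \be) + \omega^2 \mu_0 \gamma_0 \be = 0$, i.e. $\be$ must lie in the kernel of $|k|^2 \Imm - k \otimes k - \omega^2\mu_0 \gamma_0$ acting appropriately; choosing $k$ to satisfy the corresponding characteristic (Fresnel) equation yields a nontrivial polarization $\be(k)$.

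The second step is to choose three wave vectors $k_1, k_2, k_3$ so that the associated polarizations $\be_1, \be_2, \be_3$ are linearly independent, giving $\det(E_1, E_2, E_3) = \det(\be_1,\be_2,\be_3)\, e^{\i(k_1+k_2+k_3)\cdot x} \neq 0$ on all of $X$. The cleanest way: take $k_j$ with small imaginary part (or purely real if $\sigma_0 = 0$ is allowed, but in general $\gamma_0$ has a nonzero real conductivity part so $k_j$ will genuinely be complex) pointing in three "generic" directions; the polarization $\be(k)$ depends continuously (indeed analytically) on $k$ where the relevant eigenspace is one-dimensional, and as $k$ ranges over directions the polarization direction sweeps out enough of the sphere that three choices can be made independent. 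Since $\det(\be_1,\be_2,\be_3)$ is a nonzero continuous function of $(k_1,k_2,k_3)$ on an open set, one simply fixes such a triple; then $c_0 := |\det(\be_1,\be_2,\be_3)| > 0$ works uniformly on $X$. The boundary data are then $f_j := \nu \times E_j|_{\partial X} \in TH_{\Div}^{1/2}(\partial X)$.

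The third step handles the perturbation from $\gamma_0$ to a nearby $\gamma$. Let $(E_j^{0}, H_j^{0})$ be the constant-coefficient plane-wave solutions with the boundary data $f_j$ above, and let $(E_j, H_j)$ be the solutions of \eqref{eq:Maxwell} with the \emph{same} $f_j$ but the perturbed coefficient $\gamma$. By the well-posedness and stability estimate recorded in Section 2 (away from resonance frequencies, and noting the resolvent depends continuously on $\gamma$ in operator norm), $\|E_j - E_j^0\|_{H^s_{\Div}(X)} \le C\|\gamma - \gamma_0\|_{L^\infty} \|f_j\|$; combined with Sobolev embedding $H^s \hookrightarrow C^0$ for $s$ large, this gives $\sup_X |E_j - E_j^0|$ small. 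Hence $\inf_{x\in X} |\det(E_1,E_2,E_3)| \ge c_0/2 > 0$ once $\|\gamma - \gamma_0\|_{L^\infty}$ is small enough, establishing Hypothesis \ref{main hypo} on $X_0 = X$.

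The main obstacle I anticipate is the second step — guaranteeing that the plane-wave polarizations can be chosen pointwise independent for a genuinely complex, anisotropic constant $\gamma_0$. One must verify the Fresnel surface is nondegenerate enough that the eigenvector $\be(k)$ is well-defined and varies enough with $k$; degenerate directions (optic axes), where the polarization eigenspace jumps dimension, must be avoided, and one needs to confirm that outside such directions the map $k \mapsto \be(k)$ really does produce three independent vectors rather than, say, always staying in a fixed plane. A convenient trick to sidestep subtle geometry is to exploit the freedom in $k$: pick $k$ large in modulus along three near-orthogonal real directions and perturb slightly into $\Cm^3$ to solve the dispersion relation, which at leading order forces $\be(k)$ nearly transverse to $k$, so three near-orthogonal choices of $k$ give three nearly-orthogonal polarizations — then the determinant is bounded below by inspection. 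Everything else (the perturbation estimate, the continuity of the determinant, the resonance-frequency caveat) is routine.
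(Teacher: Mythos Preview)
Your overall strategy---plane waves for the constant-coefficient problem, then perturbation---matches the paper's. The difference is in step two, and it is worth knowing because it dissolves exactly the obstacle you flag.

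The paper does not analyze the Fresnel surface or track how the polarization $\be(k)$ varies with $k$. Instead, it picks the polarizations \emph{first}, as the eigenvectors of $\gamma_0$. Concretely: write $\gamma_0 = Q\Lambda Q^T$ with $Q=[\beta_1,\beta_2,\beta_3]$ complex-orthogonal and $\Lambda=\diag(k_1,k_2,k_3)$. Set $E_j = \beta_j e^{\i\zeta_j\cdot x}$ and substitute into the Helmholtz equation $\nabla\times\nabla\times E + \i\omega\mu_0\gamma_0 E = 0$; this yields
\[
(\beta_j\cdot\zeta_j)\zeta_j - (\zeta_j\cdot\zeta_j)\beta_j = \i\omega\mu_0\gamma_0\beta_j = \i\omega\mu_0 k_j\beta_j.
\]
Choosing $\zeta_j$ with $\zeta_j\cdot\beta_j = 0$ and $\zeta_j\cdot\zeta_j = -\i\omega\mu_0 k_j$ makes both sides match. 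In particular one can take $\zeta_1 = t_1\beta_2$, $\zeta_2 = t_2\beta_3$, $\zeta_3 = t_3\beta_1$ with $t_j^2 = -\i\omega\mu_0 k_j$. Then $\det(E_1,E_2,E_3) = \det(\beta_1,\beta_2,\beta_3)\,e^{\i(\zeta_1+\zeta_2+\zeta_3)\cdot x}$ is nowhere zero, and independence is automatic---no genericity argument, no optic-axis avoidance, no large-$|k|$ asymptotics.

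Your proposed ``large $|k|$'' trick is shakier than you suggest: the dispersion relation fixes $\zeta\cdot\zeta$ in terms of $\omega,\mu_0,\gamma_0$, so you cannot freely scale $|k|$ along a real direction; you would have to send $\zeta$ off to infinity in $\Cm^3$ along a null cone, and then the exponentials $e^{\i\zeta\cdot x}$ blow up on $X$, which complicates the perturbation estimate. The eigenvector choice avoids all of this.

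Your third step (continuity of the forward map in $\gamma$, Sobolev embedding, determinant continuity) is fine and is in fact more explicit than what the paper writes in this lemma; the paper defers the detailed perturbation argument to the later local-reconstruction section.
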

\begin{proof}
Decompose the tensor $\gamma_0=Q\Lambda Q^T$ for a diagonal $\Lambda\in M_3$ and $Q^TQ=I$. This decomposition is possible since  a symmetric matrix is diagonalizable if and only if it is complex orthogonally diagonalizable, see \cite[Theorem 4.4.13]{Horn1990}. We write $Q=[\beta_1,\beta_2,\beta_3]$ and $k_1$, $k_2$, $k_3$ the components on the diagonal of $\Lambda$, such that $\gamma_0\beta_j=k_j\beta_j$, $j=1,2,3$. We choose plane waves as possible solutions to Maxwell's equations \eqref{eq:Maxwell},
\begin{align}
E_j=\beta_je^{i\zeta_j\cdot x}, \quad 1\le j\le 3,
\end{align}
with some $\zeta_j$ to be chosen in $\mathbb{C}^3$. Applying the curl operator to the first equation in \eqref{eq:Maxwell}, we get the vector Helmholtz equation,
\begin{align}\label{helm}
\nabla\times\nabla\times E_j+\i\omega\mu_0\gamma_0 E_j=0
\end{align}
where $\gamma_0=\sigma_0+\i\omega\varepsilon_0$. Using the fact that $\nabla\times\nabla\times=-\Delta+\nabla\div$, the above equation amounts to 
\begin{align}
(\zeta_j\cdot\zeta_j)e^{\i\zeta_j\cdot x}\beta_j-(\beta_j\cdot\zeta_j)e^{\i\zeta_j\cdot x}\zeta_j+\i\omega\mu_0e^{\i\zeta_j\cdot x}\gamma_0 \beta_j=0
\end{align}
Since $e^{\i\zeta_j\cdot x}$ is never zero, the above equation reduces to,
\begin{align}\label{check gamma}
(\beta_j\cdot\zeta_j)\zeta_j-(\zeta_j\cdot\zeta_j)\beta_j=\i\omega\mu_0\gamma_0\beta_j.
\end{align} 
By choosing $\zeta_j$ to be orthogonal to $\beta_j$ and $\zeta_j\cdot\zeta_j=-\i\omega\mu_0k_j$, equation \eqref{check gamma} obviously holds by noticing that $\gamma_0\beta_j=k_j\beta_j$. From the above analysis, the solutions can be chosen as follows,
\begin{align}\label{indep sol}
\left\{\begin{array}{lll}
E_1=\beta_1e^{it_1\beta_2\cdot x}\\
E_2=\beta_2e^{it_2\beta_3\cdot x}\\
E_3=\beta_3e^{it_3\beta_1\cdot x}
\end{array}\right.
\end{align}
where $t_i$ are chosen such that $t_i^2=-\i\omega\mu_0k_i$ for $1\leq i\leq 3$. Then $E_1,E_2,E_3$ are solutions to Maxwell's equations \eqref{eq:Maxwell} and are obviously independent.
\end{proof}
The next proposition states that, some proper linear combinations of the solutions chosen in Hypothesis \ref{main hypo} also satisfy the Maxwell system \eqref{eq:Maxwell}. 
\begin{proposition}\label{add sol}
Let us choose the electric fields $E_{3+k}=\sum_{i=1}^{3}\lambda^k_i \beta_ie^{\i\zeta_i\cdot x}$ such that  $\lambda^k$ has a constant gradient verifying that $\nabla\lambda^k_i\perp\{\beta_i, \zeta_i\}$, where $\beta_i, \zeta_i$ are chosen in \eqref{indep sol}.
Then $E_{3+k}$ solves Maxwell's equations \eqref{eq:Maxwell} for $\gamma=\gamma_0$.
\end{proposition}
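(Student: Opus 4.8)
The plan is to verify directly that each proposed $E_{3+k}=\sum_{i=1}^3 \lambda_i^k \beta_i e^{\i\zeta_i\cdot x}$ satisfies the vector Helmholtz equation \eqref{helm} with $\gamma_0$, since for constant $\gamma_0$ that equation is equivalent to the first-order Maxwell system \eqref{eq:Maxwell} (once the divergence constraint is checked, or one simply defines $H_{3+k}=\frac{\i}{\omega\mu_0}\nabla\times E_{3+k}$ and reads off the second equation). Because the $\lambda_i^k$ have constant gradient, the map $x\mapsto \lambda_i^k$ is affine, so the only nontrivial computation is applying $\nabla\times\nabla\times=-\Delta+\nabla\div$ to a product of a linear coefficient and a plane wave. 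First I would fix one summand $u_i := \lambda_i^k(x)\,\beta_i e^{\i\zeta_i\cdot x}$ and compute $\Delta u_i$ and $\nabla\div u_i$ using the product rule, writing $g:=\nabla\lambda_i^k$ (a constant vector). The point is that all the ``cross terms'' between the gradient $g$ and the exponential are killed by the orthogonality hypotheses $g\perp\beta_i$ and $g\perp\zeta_i$.

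Concretely: $\nabla u_i = \beta_i e^{\i\zeta_i\cdot x}\otimes g + \i\,\lambda_i^k e^{\i\zeta_i\cdot x}\,\beta_i\otimes\zeta_i$, so $\div u_i = (\beta_i\cdot g)e^{\i\zeta_i\cdot x} + \i\lambda_i^k(\beta_i\cdot\zeta_i)e^{\i\zeta_i\cdot x}$. In \eqref{indep sol} the $\zeta_i$ were chosen with $\zeta_i\perp\beta_i$, so $\beta_i\cdot\zeta_i=0$, and the new hypothesis gives $\beta_i\cdot g=0$; hence $\div u_i\equiv 0$, which also disposes of the divergence constraint. For the Laplacian, $\Delta u_i = \big(\Delta\lambda_i^k + 2\i\,g\cdot\zeta_i + \i^2\lambda_i^k(\zeta_i\cdot\zeta_i)\big)\beta_i e^{\i\zeta_i\cdot x}$; here $\Delta\lambda_i^k=0$ since $\lambda_i^k$ is affine, and $g\cdot\zeta_i=0$ by hypothesis, leaving $\Delta u_i = -\lambda_i^k(\zeta_i\cdot\zeta_i)\beta_i e^{\i\zeta_i\cdot x}$. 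Therefore $\nabla\times\nabla\times u_i = -\Delta u_i = \lambda_i^k(\zeta_i\cdot\zeta_i)\beta_i e^{\i\zeta_i\cdot x}$. Summing over $i$ and using $\zeta_i\cdot\zeta_i=-\i\omega\mu_0 k_i$ together with $\gamma_0\beta_i=k_i\beta_i$, one gets $\nabla\times\nabla\times E_{3+k} = -\i\omega\mu_0\sum_i \lambda_i^k k_i \beta_i e^{\i\zeta_i\cdot x} = -\i\omega\mu_0\,\gamma_0 E_{3+k}$, which is exactly \eqref{helm}. Defining $H_{3+k}:=\frac{\i}{\omega\mu_0}\nabla\times E_{3+k}$ then recovers the full system \eqref{eq:Maxwell} with $\gamma=\gamma_0$, noting that the first equation holds by construction and the second follows from \eqref{helm} plus $\div E_{3+k}=0$.

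The computation is entirely routine; there is no real obstacle beyond bookkeeping. The one place to be slightly careful is the equivalence between the second-order equation \eqref{helm} and the first-order system: one needs $\nabla\cdot(\gamma_0 E_{3+k})=0$, which for constant $\gamma_0$ reduces to checking $\div E_{3+k}=0$ componentwise — and this is precisely the $\div u_i\equiv 0$ established above, so it comes for free from the same orthogonality hypotheses. I would also remark that such $g=\nabla\lambda_i^k$ exist and can be chosen nonzero (the orthogonal complement of $\mathrm{span}\{\beta_i,\zeta_i\}$ in $\Cm^3$ is at least one-dimensional), which is what makes the later Hypothesis \ref{2 hypo} rank condition achievable, though that verification belongs to a subsequent step rather than this proposition.
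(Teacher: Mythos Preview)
Your proof is correct and follows essentially the same approach as the paper: both verify that $E_{3+k}$ satisfies the vector Helmholtz equation \eqref{helm} by exploiting the orthogonality hypotheses $\nabla\lambda_i^k\perp\beta_i$ and $\nabla\lambda_i^k\perp\zeta_i$ to kill all cross terms. The only difference is cosmetic---you compute $\nabla\times\nabla\times u_i$ via the decomposition $-\Delta+\nabla\div$ applied directly to the scalar factor $\lambda_i^k e^{\i\zeta_i\cdot x}$, whereas the paper expands $\nabla\times\nabla\times(\lambda_i^k E_i)$ through vector-calculus product identities; your route is arguably a bit cleaner. (One minor aside: the passage from \eqref{helm} to the first-order system \eqref{eq:Maxwell} does not actually require the separate check $\div E_{3+k}=0$---once \eqref{helm} holds, defining $H_{3+k}=\frac{\i}{\omega\mu_0}\nabla\times E_{3+k}$ already forces $\nabla\times H_{3+k}=\gamma_0 E_{3+k}$---so that paragraph is harmless over-caution rather than a needed step.)
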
 
\begin{proof}
Assume that Hypothesis \ref{main hypo} holds and pick $E_i=\beta_ie^{\i\zeta_i\cdot x}$ defined in \eqref{indep sol} for $i=1,2,3$. We pick addition electric fields as indicated in \eqref{ln dep},
\begin{align}
E_{3+k}=\sum_{i=1}^{3}\lambda^k_i E_i, \quad  k=1,2,\ldots
\end{align}
where $\lambda_i^k$ are to be determined. Inserting $E_{n+k}$ into the vector Helmholtz equation \eqref{helm}, we get, 
\begin{align*}
\nabla\times\nabla\times E_{n+k}=&\nabla\times\nabla\times (\lambda^k_i E_i)\\
=&(\div E_i+E_i\cdot\nabla )\lambda^k_i - (\div\nabla\lambda^k_i+\nabla\lambda^k_i\cdot\nabla)E_i +\nabla\lambda^k_i\times\nabla\times E_i+\lambda^k_i\nabla\times\nabla\times E_i\\
=&-\i\omega\mu_0\gamma_0 \lambda^k_i E_i
\end{align*}
Here we choose $\nabla\lambda^k_i$ to be constant and $\nabla\lambda^k_i\perp \beta_i$. Using the fact that $\div E_i=0$ for the special solutions in \eqref{indep sol} and $E_i$ satisfies the Helmholtz equation \eqref{helm}, the above equation reads
\begin{align}\label{E i}
-(\nabla\lambda^k_i\cdot\nabla)E_i+\nabla\lambda^k_i\times(\nabla\times E_i)=0.
\end{align}
Let $\nabla_{E_i}$ denotes the subscripted gradient operator on the factor $E_i$, the basic formulas for curl operator give that,
\begin{align*}
\nabla\lambda^k_i\times(\nabla\times E_i)=&\nabla_{E_i}(\nabla\lambda^k_i\cdot E_i)- (\nabla\lambda^k_i\cdot\nabla)E_i\\
=&\i(\nabla\lambda^k_i\cdot\beta_i)(\nabla\lambda^k_i\cdot\bfe_p) e^{\i\zeta_i\cdot x}\bfe_p-(\nabla\lambda^k_i\cdot\nabla)E_i.
\end{align*}
By choosing $\nabla\lambda^k_i\perp \beta_i$, equation \eqref{E i} reduces to,
\begin{align}
(\nabla\lambda^k_i\cdot\nabla)E_i=\i(\nabla\lambda^k_i\cdot\zeta_i)E_i=0.
\end{align}
Since $E_1,E_2,E_3$ are independent, the above equation holds if and only if $\nabla\lambda^k_i\cdot\zeta_i=0$, for $i=1,2,3$. Therefore, $E_{3+k}=\sum_{i=1}^{3}\lambda^k_i \beta_ie^{\i\zeta_i\cdot x}$ solves the Maxwell's equation \eqref{eq:Maxwell}, with $\nabla\lambda^k_i, \beta_i, \zeta_i$ an orthogonal basis in $\mathbb{C}^3$. 
\end{proof}

Thanks to Proposition \ref{add sol}, we can choose 3 additional solutions as follows:
\begin{align}\label{choose add sol}
\left\{\begin{array}{lll}
E_{3+1}=\lambda_1E_1=\lambda_1\beta_1e^{it_1\beta_2\cdot x}\\
E_{3+2}=\lambda_2E_2=\lambda_2\beta_2e^{it_2\beta_3\cdot x}\\
E_{3+3}=\lambda_3E_3=\lambda_3\beta_3e^{it_3\beta_1\cdot x}
\end{array}\right.
\end{align}
where $E_1,E_2,E_3$ are chosen in \eqref{indep sol} and $\nabla\lambda_1,\nabla\lambda_2,\nabla\lambda_3$ are chosen to be $\beta_3,\beta_1,\beta_2$, respectively.\\

The following lemma proves that $\mathcal{W}$ is of full rank in $S_3(\mathbb{C})$ in homogeneous media. 
\begin{lemma}\label{sol full rank}
Suppose that the admittivity $\gamma$ is sufficiently close to a constant tensor $\gamma_0$. Then Hypothesis \ref{2 hypo} is fulfilled by choosing a minimum number of 6 electric fields as indicated in \eqref{indep sol} and \eqref{choose add sol}.
\end{lemma}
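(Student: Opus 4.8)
The plan is to compute the space $\mathcal{W}$ explicitly for the six solutions chosen in \eqref{indep sol} and \eqref{choose add sol}, and to show that, at least at the limit $\gamma = \gamma_0$, the resulting six symmetric matrices span all of $S_3(\mathbb{C})$; stability under a small perturbation of $\gamma$ away from $\gamma_0$ then follows because full rank is an open condition. First I would record the three matrices $Z_k$ for $k=1,2,3$. By Proposition \ref{add sol} the coefficient $\lambda^k$ has constant gradient, with $\nabla\lambda_1 = \beta_3$, $\nabla\lambda_2 = \beta_1$, $\nabla\lambda_3 = \beta_2$, and since $E_{3+k} = \lambda_k E_k$ the only nonzero column of $Z_k$ is the $k$-th one, namely $Z_{k,k} = \nabla\lambda_k$; thus $Z_1 = [\,\beta_3\mid 0\mid 0\,]$, $Z_2 = [\,0\mid\beta_1\mid 0\,]$, $Z_3 = [\,0\mid 0\mid\beta_2\,]$, i.e. $Z_k = \nabla\lambda_k\otimes\bfe_k$ in the identification used in \eqref{Y Z}. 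Next I would identify $Y$: since $\star dE_i = -\i\omega\mu_0 H_i$ and $\star dH_i = \gamma_0 E_i$, we have $Y_i = \nabla\times H_i = \gamma_0 E_i = k_i\beta_i e^{\i\zeta_i\cdot x}$ (using $\gamma_0\beta_i = k_i\beta_i$), so $Y = [\,k_1\beta_1 e^{\i\zeta_1\cdot x}\mid k_2\beta_2 e^{\i\zeta_2\cdot x}\mid k_3\beta_3 e^{\i\zeta_3\cdot x}\,]$. Consequently $Z_k Y^T = (\nabla\lambda_k\otimes\bfe_k)\,Y^T = k_k e^{\i\zeta_k\cdot x}\,\nabla\lambda_k\otimes\beta_k$, a rank-one matrix; recalling $\nabla\lambda_k$ and $\beta_k$ are distinct members of the orthonormal-up-to-$Q^TQ=I$ triple $\{\beta_1,\beta_2,\beta_3\}$.

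The heart of the argument is then a dimension count on $\mathcal{W} = \{(\Omega Z_k Y^T)^{sym} : \Omega\in A_3(\Rm),\ 1\le k\le 3\}$. For each fixed $k$, $\Omega\mapsto (\Omega\, v_k\otimes w_k)^{sym}$ is a linear map from the $3$-dimensional space $A_3(\Rm)$ into $S_3$, where $v_k = \nabla\lambda_k$ and $w_k = \beta_k$ (absorbing the nonzero scalar $k_k e^{\i\zeta_k\cdot x}$). I would compute its image: writing $\Omega = a\times\,\cdot\,$ for $a\in\Rm^3$, one has $\Omega v_k\otimes w_k = (a\times v_k)\otimes w_k$, whose symmetrization is $\tfrac12\big((a\times v_k)\otimes w_k + w_k\otimes(a\times v_k)\big)$. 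As $a$ ranges over $\Rm^3$, $a\times v_k$ ranges over $v_k^{\perp}$, a $2$-plane, so this contributes a $2$-dimensional subspace $U_k\subset S_3$ (it is $2$- and not lower-dimensional because $w_k\notin v_k^\perp$ is impossible to have the symmetrization collapse — $w_k$ and the varying vector are never parallel for generic choices). Three such $2$-planes $U_1, U_2, U_3$ live in the $6$-dimensional $S_3(\mathbb{C})$, and I must show $U_1 + U_2 + U_3 = S_3(\mathbb{C})$. This is the step I expect to be the main obstacle: it is a concrete but slightly delicate linear-algebra verification that the three pairs $(v_k, w_k)$ — which here are the three ordered pairs $(\beta_3,\beta_1)$, $(\beta_1,\beta_2)$, $(\beta_2,\beta_3)$ drawn from the basis $\{\beta_1,\beta_2,\beta_3\}$ — are in ``general enough position'' that the spans fill out $S_3$. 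Concretely, in the basis $\{\beta_i\otimes\beta_j + \beta_j\otimes\beta_i\}$ of $S_3$, one checks that $U_1$ (from the pair $(\beta_3,\beta_1)$) contains the symmetrized products of $\beta_1$ with $\beta_1^\perp = \mathrm{span}(\beta_2,\beta_3)$, hence the matrices $\beta_1\odot\beta_2$ and $\beta_1\odot\beta_3$; similarly $U_2$ yields $\beta_2\odot\beta_3$ and $\beta_2\odot\beta_1$, and $U_3$ yields $\beta_3\odot\beta_1$ and $\beta_3\odot\beta_2$. Together these six matrices include all three ``off-diagonal'' generators $\beta_1\odot\beta_2$, $\beta_2\odot\beta_3$, $\beta_3\odot\beta_1$ — but NOT obviously the three diagonal ones $\beta_i\odot\beta_i$. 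So a genuine check is needed: one must verify that $a\times v_k$ can have a nonzero component along $w_k$-orthogonal directions in a way that produces diagonal entries, or else one realizes that the pairs must be chosen slightly differently (e.g. $\nabla\lambda_k$ a generic vector in $\beta_k^\perp$ rather than another basis vector) so that the symmetrizations $(a\times v_k)\odot w_k$ with $a\times v_k$ having a component along $v_k$... — this is exactly where the freedom in Proposition \ref{add sol} (any $\nabla\lambda^k_i\perp\{\beta_i,\zeta_i\}$, and indeed one may superpose several $k$'s) must be exploited to get the missing directions.

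Assuming that verification goes through — I would likely handle it by an explicit choice such as taking, in addition to or in place of the three solutions of \eqref{choose add sol}, combinations with $\nabla\lambda_k$ pointing in a generic direction of $\beta_k^\perp$, and then exhibiting $6$ linearly independent elements of $\mathcal{W}$ by a determinant computation — the conclusion at $\gamma=\gamma_0$ is that $\mathcal{W}$ has full rank $6$ in $S_3(\mathbb{C})$ at every $x\in X$ (note the rank is constant in $x$ because the only $x$-dependence is through the nonvanishing scalars $k_k e^{\i\zeta_k\cdot x}$, which do not affect the span). Finally, for $\gamma$ in a small $C^0$ (indeed $C^1$) neighborhood of $\gamma_0$, the solutions $E_j$ depend continuously on $\gamma$ in a suitable norm by the well-posedness estimate of Section 2, hence so do $Y$, the $\lambda^k_i$, and the $Z_k$; since ``$6$ chosen elements of $\mathcal{W}$ are linearly independent'' is an open condition (nonvanishing of a determinant), it persists under the perturbation, uniformly on $X$. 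This gives Hypothesis \ref{2 hypo} with $X' = X$ and $m = 3$, i.e. $J = 6$, as claimed.
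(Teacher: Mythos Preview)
Your overall strategy---compute $Z_k Y^T$ explicitly as a rank-one matrix, describe the image of $\Omega\mapsto(\Omega Z_k Y^T)^{sym}$, and show the three images together span $S_3(\mathbb{C})$---is sound and is essentially the direct version of the paper's dual argument (the paper instead takes $A\in S_3(\mathbb{C})$ with $A\perp\mathcal{W}$ and shows $A=0$). Your setup is correct: $Z_k=\nabla\lambda_k\otimes\bfe_k$, $Y_i=k_i e^{\i\zeta_i\cdot x}\beta_i$, and hence $Z_kY^T=c_k\,\nabla\lambda_k\otimes\beta_k$ for a nonzero scalar $c_k$.

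However, there is a concrete slip in the key linear-algebra step. You correctly write $U_k=\{(a\times v_k)\odot w_k:a\in\Rm^3\}$ with $v_k=\nabla\lambda_k$ and $w_k=\beta_k$, and correctly note that $a\times v_k$ ranges over $v_k^{\perp}$. But when you specialize to $k=1$, with $(v_1,w_1)=(\beta_3,\beta_1)$, you then say $U_1$ consists of symmetrized products of $\beta_1$ with $\beta_1^{\perp}$. That is the wrong perp: it is $v_1^{\perp}=\beta_3^{\perp}=\mathrm{span}(\beta_1,\beta_2)$ that is relevant, not $w_1^{\perp}=\beta_1^{\perp}$. Correcting this gives
\[
U_1=\mathrm{span}(\beta_1\odot\beta_1,\ \beta_1\odot\beta_2),\quad
U_2=\mathrm{span}(\beta_2\odot\beta_2,\ \beta_2\odot\beta_3),\quad
U_3=\mathrm{span}(\beta_3\odot\beta_3,\ \beta_3\odot\beta_1),
\]
and these six matrices are exactly the standard basis of $S_3(\mathbb{C})$ relative to $\{\beta_i\}$. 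In particular the diagonal elements $\beta_i\odot\beta_i$ \emph{are} present; your subsequent discussion about ``missing diagonals'' and exploiting extra freedom from Proposition~\ref{add sol} is unnecessary (and in fact that freedom is illusory here: since $\zeta_k\parallel\beta_{k+1}$ in \eqref{indep sol}, the constraint $\nabla\lambda_k\perp\{\beta_k,\zeta_k\}$ already forces $\nabla\lambda_k\parallel\beta_{k-1}$ up to scalar).

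One further point worth making explicit in your write-up: since $\Omega$ is required to be \emph{real} antisymmetric while $\beta_j$ is complex, the set $\{\Omega\beta_j:\Omega\in A_3(\Rm)\}$ is only an $\Rm$-linear subspace of $\mathbb{C}^3$; you should note that its $\mathbb{C}$-span equals $\{\Omega\beta_j:\Omega\in A_3(\mathbb{C})\}=\beta_j^{\perp}$, which is what justifies the identification of $U_k$ above as a two-dimensional complex subspace. With these two fixes your argument is complete and slightly more constructive than the paper's, which obtains the same conclusion by showing $Z_kY^TA$ symmetric for all $k$ forces $A=0$.
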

\begin{proof}
As indicated in Proposition \ref{add sol}, we pick additional solutions $E_{n+k}=\lambda_kE_k$, for $k=1,2,3$, where $\nabla\lambda_1=\beta_3$,$\nabla\lambda_2=\beta_1$ and $\nabla\lambda_3=\beta_2$. Let $A\in S_3(\mathbb{C})$ and suppose that $A\perp\mathcal{W}$, we aim to show that $A$ vanishes. Decompose $A$ in terms of $\beta_i\otimes\beta_j$,
\begin{align}
A=A_{ij}\beta_i\otimes\beta_j, \quad \text{where} \quad A_{ij}=A_{ji}.
\end{align}
Here and below, we use the implicit summation notation for the index $i$ and $j$. Thus,
\begin{align*}
  Z_k Y^T=Z_k(\gamma E)^T=&-\frac{1}{\i\omega\mu_0}Z_k[(\zeta_1\cdot\zeta_1)E_1,(\zeta_2\cdot\zeta_2)E_2,(\zeta_3\cdot\zeta_3)E_3]^T\\
=&-\frac{1}{\i\omega\mu_0}(\zeta_k\cdot\zeta_k)\nabla\lambda_k\otimes E_k
\end{align*}
for $k=1,2,3$. Since $A\perp\mathcal{W}$ implies that $Z_k Y^TA$ is symmetric, we deduce the following equation,
\begin{align}
A_{ij}(\nabla\lambda_k\otimes E_k)(\beta_i\otimes\beta_j)=A_{ij}(\beta_i\otimes\beta_j)(E_k\otimes\nabla\lambda_k).
\end{align}
By definition $E_k=\beta_k e^{\i\zeta_k\cdot x}$ and the orthogonality of $\{\beta_i\}_{1\leq i\leq 3}$ , the above equation reduces to
\begin{align}
A_{i,k+1}(\beta_k\otimes\beta_i-\beta_i\otimes\beta_k)=0, \quad \text{for}\quad k=1,2,3
\end{align}
where we identify $k+1:=1$, for $k=3$. Notice that $\{\beta_k\otimes\beta_i-\beta_i\otimes\beta_k\}_{i,k=1,2,3}$ form a basis in $A_3(\mathbb{C})$, so obviously $A_{i,k+1}=0$, for any $i\neq k$, which implies that $A_{ij}=0$, for any $i,j$. Thus $\mathcal{W}$ is of full rank in $S_3(\mathbb{C})$.
\end{proof}
\begin{remark}
Since the Maxwell system can be written in the sense of differential forms as in \eqref{eq:max diff} for an arbitrary $n$ dimension space, the above reconstruction formulas can thus be generalized to the $n$ dimensional case. The proof of Lemmas \ref{basic sol} and \ref{sol full rank} in $n$ dimensions is analogous to the $3$ dimensional case.  
\end{remark}
\subsection{Global reconstructions for isotropic tensor}\label{CGO iso}
In this section, we suppose that the admittivity $\gamma$ is scalar. We will show that $\gamma$ can be reconstructed via a redundant elliptic system by constructing $6$ Complex Geometrical Optics solutions. CGO solutions are constructed in \cite{Colton1992a} and their properties can be extended to higher order Sobolev spaces, see \cite{Chen2013}. The approach in \cite{Chen2013} can be used to reconstruct the scalar $\gamma$.
\begin{theorem}
Let $\gamma(x)$ be a smooth scalar function. Then there exist $6$ internal magnetic fields $\{H_i\}_{1\leq i\leq 6}$ such that $\gamma$ is uniquely reconstructed via the following redundant elliptic equation,
\begin{align}\label{re ellip}
\nabla\gamma+\beta(x)\gamma=0
\end{align}
where $\beta(x)$ is an invertible matrix, which is uniquely determined by the measurements. Moreover, the stability result \eqref{eq:stability} holds for $X_0=X$.
\end{theorem}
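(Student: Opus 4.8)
The plan is to construct six CGO solutions of the Maxwell system for the scalar admittivity $\gamma$ and to extract from the associated internal magnetic fields a pointwise-invertible matrix $\beta(x)$ so that $\gamma$ solves a first-order overdetermined elliptic equation \eqref{re ellip}. First I would recall that for scalar $\gamma$ the second-order equation for the magnetic field decouples: applying $\nabla\times$ to the second equation in \eqref{eq:Maxwell} and using $E=\gamma^{-1}\nabla\times H$ gives $\nabla\times(\gamma^{-1}\nabla\times H)-\i\omega\mu_0 H=0$, i.e. a conductivity-type equation with the gradient of $\log\gamma$ appearing as a first-order term. I would then invoke the CGO construction of \cite{Colton1992a}, in the higher-regularity form of \cite{Chen2013}, to produce magnetic fields of the form $H=e^{\rho\cdot x}(\eta+r)$ with $\rho\in\Cm^3$, $\rho\cdot\rho=0$, $|\rho|$ large, $\eta$ a constant polarization vector, and $r$ a remainder that is small in the relevant $W^{s,\infty}$ norm. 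The point of these solutions is that for $|\rho|$ large the leading behaviour of $\nabla\times H$, and of quantities built from it, is governed by $\rho$ and $\eta$, so one can arrange the geometry to make the extracted matrix nondegenerate.

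Next I would derive the algebraic identity that isolates $\nabla\gamma$. Starting from the differential-form version \eqref{eq:max diff} with $\gamma$ scalar, $\star dH_i=\gamma E_i$, and taking the exterior derivative, $0=d(\gamma E_i)=d\gamma\wedge E_i+\gamma\, dE_i = d\gamma\wedge E_i-\i\omega\mu_0\gamma\,\star H_i$, using $dE_i=-\i\omega\mu_0\star H_i$ (here $\star H_i$ is the $2$-form dual to $H_i$). Rewriting $E_i=\gamma^{-1}\nabla\times H_i$ and multiplying through by $\gamma$, this becomes $\nabla(\log\gamma)\wedge(\nabla\times H_i)=\i\omega\mu_0\,\star H_i - (\text{term with }\nabla\times\nabla\times H_i \text{ that is computable from }H_i)$; more precisely, pairing the resulting $2$-form with coordinate vectors $\bfe_p,\bfe_q$ produces, for each $i$ and each pair $(p,q)$, a scalar linear equation in the three unknowns $\partial_1\log\gamma,\partial_2\log\gamma,\partial_3\log\gamma$ whose coefficients are entries of $\nabla\times H_i$ and whose right-hand side is a known combination of $H_i$ and its derivatives. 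Collecting these over the six fields $H_1,\dots,H_6$ and a suitable choice of index pairs yields an overdetermined linear system $\beta(x)\,\nabla\log\gamma = b(x)$, equivalently $\nabla\gamma+\beta(x)\gamma=0$ after absorbing $\gamma$, with $\beta$ and $b$ built entirely from the measured magnetic fields; two derivatives of $H$ enter (one from $\nabla\times H$, one from differentiating $\lambda$-type quotients or the $\star H$ term), matching the claimed two-derivative loss.

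The crux is to show $\beta(x)$ can be made invertible at every $x\in X$ by the CGO choice. The plan here is: with $H_i=e^{\rho_i\cdot x}(\eta_i+r_i)$, compute $\nabla\times H_i\approx \rho_i\times\eta_i\, e^{\rho_i\cdot x}$ to leading order in $|\rho_i|$, so that the coefficient block contributed by the $i$-th field to $\beta$ is, modulo lower-order terms, the antisymmetric matrix $[\rho_i\times\eta_i]_\times$ (the cross-product matrix). Choosing three pairs $(\rho_i,\eta_i)$ whose vectors $\rho_i\times\eta_i$ span $\Rm^3$ (or $\Cm^3$) — which is possible because the constraints $\rho_i\cdot\rho_i=0$, $\rho_i\cdot\eta_i=0$ (divergence-free field) still leave enough freedom, exactly as in the construction used for Hypotheses \ref{main hypo} and \ref{2 hypo} in Section \ref{sec const} — makes the $9\times 3$ matrix $\beta$ have full column rank $3$ uniformly, after taking $|\rho_i|$ large enough to dominate the CGO remainders $r_i$. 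Then $\nabla\gamma=-\beta^{+}b\,\gamma$ with $\beta^+$ a left inverse, and $\gamma$ is determined up to a multiplicative constant by solving this ODE-along-curves / gradient system; the constant is fixed by one boundary value $\gamma|_{\partial X}$, assumed known, or by a single CGO asymptotic normalization. Finally, the stability estimate \eqref{eq:stability} on $X_0=X$ follows because $\beta$ and $b$ depend on $H_i$ with a loss of two derivatives, $\beta^{-1}$ (or $\beta^+$) is bounded below uniformly by the full-rank construction, and $\nabla\gamma=-\beta^{+}b\,\gamma$ is a linear transport equation whose solution depends Lipschitz-continuously in $W^{s,\infty}$ on the coefficients; the main obstacle is precisely the uniform (in $x\in X$) invertibility of $\beta$, i.e. verifying that the CGO remainders do not destroy full rank anywhere in the closed domain, which requires the $W^{s,\infty}$ (rather than merely $L^2$) CGO estimates of \cite{Chen2013} and a careful choice of the large parameters $|\rho_i|$.
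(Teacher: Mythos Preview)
Your derivation contains a basic error that breaks the argument. You claim $0=d(\gamma E_i)$, but this is false: since $\gamma E_i=\star dH_i$ as a $1$-form, one has $d(\gamma E_i)=d\star dH_i$, which corresponds to $\nabla\times(\nabla\times H_i)$ and is generically nonzero. What \emph{is} true is $\nabla\cdot(\gamma E_i)=0$, i.e.\ $d\!\star\!(\gamma E_i)=0$, but substituting $E_i=\gamma^{-1}\nabla\times H_i$ into that identity collapses to $0=0$ and carries no information on $\gamma$. If instead you keep the correct nonzero right-hand side, the $H$-equation $\nabla\times(\gamma^{-1}\nabla\times H_i)=\i\omega\mu_0 H_i$ rewrites as
\[
[\nabla\times H_i]_\times\,\nabla\gamma \;=\; \gamma\,\nabla\times\nabla\times H_i \;-\; \i\omega\mu_0\,\gamma^2\,H_i,
\]
which is \emph{quadratic} in $\gamma$; the right-hand side is not ``a known combination of $H_i$ and its derivatives''. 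A single solution therefore cannot produce a linear transport equation of the form \eqref{re ellip}, and stacking six such single-solution relations does not help.

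The paper's proof avoids this by working with the $E$-field Helmholtz equation $\nabla\times\nabla\times E_j=k^2 n E_j$ and by using CGO solutions for $E$ (not for $H$), with phase condition $\zeta\cdot\zeta=k^2$ rather than $\rho\cdot\rho=0$, as constructed in \cite{Colton1992a,Chen2013}. The decisive step is to take a \emph{pair} $(E_1,E_2)$ and form the antisymmetric combination $(\nabla\times\nabla\times E_1)\cdot E_2-(\nabla\times\nabla\times E_2)\cdot E_1$, which annihilates the unknown term $k^2 n\,E_1\cdot E_2$. After substituting $E_j=\gamma^{-1}Y_j$ with $Y_j=\nabla\times H_j$ measured, one obtains a genuine linear transport equation $\theta\cdot\nabla\gamma+\vartheta\gamma=0$ with $\theta,\vartheta$ built from the data alone; the CGO asymptotics then give $\theta\approx\gamma^2\zeta_0$ for $|\zeta|$ large. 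Three such pairs (hence six solutions) with independent limit directions $\zeta_0^j$ yield the invertible matrix $[\theta_1,\theta_2,\theta_3]$ and thus \eqref{re ellip}. Your single-solution scheme misses exactly this pairwise cancellation, so your matrix $\beta$ cannot be assembled from the measurements without an unknown $\gamma$-dependent term remaining.
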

\begin{proof}
The system \eqref{eq:Maxwell} can be rewritten as the Helmholtz equation,
\begin{align}\label{eq:helm}
\nabla\times\nabla\times E-k^2nE=0
\end{align}
where the wave number $k$ is given by $k=\omega\sqrt{\epsilon_0\mu_0}$ with $\epsilon_0$ the dielectric constant, and the refractive index $n=\frac{1}{\epsilon_0}(\epsilon(x)-\i\frac{\sigma(x)}{\omega})$. The proof is based on the construction of complex geometrical optics solutions of the form,
\begin{align}\label{CGO sol}
E(x)=e^{\i\zeta\cdot x}(\eta+R_{\zeta}(x))
\end{align}
where $\zeta,\eta\in \mathbb{C}^3$, $\zeta\cdot\zeta=k^2$ and $\zeta\cdot\eta=0$. The existence of $R_{\zeta}$ in $\C^2(X)$ was proved in \cite{Colton1992a} and can be generalized to an arbitrary regular space $\C^d(X)$, see \cite{Chen2013}. 
Now picking two CGO solutions $E_1,E_2$ as defined in \eqref{CGO sol}, we derive the following equation from \eqref{eq:helm},
\begin{align}
\nabla\times\nabla \times E_1\cdot E_2+\nabla\times\nabla\times E_2\cdot E_1=0
\end{align}
Substituting the measurements $Y_j=\nabla\times H_j=\gamma E_j$ into the above equation gives the following transport equation,
\begin{align}\label{transport theta}
\theta\cdot\nabla\gamma+\vartheta\gamma=0
\end{align}
where 
\begin{align}
\begin{split}
\theta &=\chi[(Y_2\cdot\nabla)Y_1+(\nabla\cdot Y_1)Y_2+2\nabla_{Y_2}(Y_1\cdot Y_2)-(Y_1\cdot\nabla)Y_2-(\nabla\cdot Y_2)Y_1-2\nabla_{Y_1}(Y_1\cdot Y_2)],\\
\vartheta &=\chi(\nabla\times\nabla \times Y_1\cdot Y_2-\nabla\times\nabla \times Y_2\cdot Y_1).
\end{split}
\label{theta}
\end{align}

We choose two specific sets of vectors $\zeta,\eta$ as in \cite{Colton1992a}. Define $\zeta_1,\zeta_2$ and $\eta_1,\eta_2$ in terms of a large real parameter $c$ and an arbitrary real number $a$,
\begin{align}\label{two CGO}
\begin{split}
\left\{\begin{array}{lll}
\zeta_1&=(a/2,\i\sqrt{c^2+a^2/4-k^2},c),\\
\zeta_2&=(a/2,-\i\sqrt{c^2+a^2/4-k^2},-c)
\end{array}\right.,\quad
\left\{\begin{array}{lll}
\eta_1&=\frac{1}{\sqrt{c^2+a^2}}(c,0,-a/2)\\
\eta_2&=\frac{1}{\sqrt{c^2+a^2}}(c,0,a/2).
\end{array}\right.
\end{split}
\end{align}
Note that
\begin{align}
\begin{split}
\lim_{c\rightarrow\infty}\eta_j&=\eta_0:=(1,0,0), \quad j=1,2,\\
\lim_{c\rightarrow\infty}\frac{\zeta_1}{|\zeta_1|}&=-\lim_{c\rightarrow\infty}\frac{\zeta_2}{|\zeta_2|}=\zeta_0:=\frac{1}{\sqrt{2}}(0,\i,1), 
\end{split}
\end{align}
and 
\begin{align}
\zeta_1+\zeta_2=(a,0,0), \quad \zeta_0\cdot\zeta_0=0,\quad \eta_0\cdot\zeta_0=0
\end{align}
By choosing $\chi(x)=-e^{-\i(\zeta_1+\zeta_2)\cdot x}\frac{1}{4\sqrt{2}c}$, $\theta$ and $\zeta_0$ have approximately the same direction when $|\zeta|$, the length of $\zeta_1,\zeta_2$, tends to infinity (see \cite[Proposition 3.6]{Chen2013}),
\begin{align}\label{large tau}
\|\theta-\gamma^2\zeta_0\|_{\C^d(X)}\leq \frac{C}{|\zeta|}.
\end{align}
Now we choose 3 independent unit vectors $\zeta_0^j$ and $\eta_0^j$, such that $\zeta_0^j\cdot\zeta_0^j=\zeta_0^j\cdot\eta_0^j=0$, $j=1,2,3$. Similarly to \eqref{two CGO}, we choose $(\zeta_1^j,\zeta_2^j)$ and $(\eta_1^j,\eta_2^j)$ such that, $|\zeta|:=|\zeta_1^j|=|\zeta_2^j|$, and also,
\begin{align}
\lim_{|\zeta|\rightarrow\infty}\frac{\zeta^j_1}{|\zeta|}=-\lim_{|\zeta|\rightarrow\infty}\frac{\zeta^j_2}{|\zeta|}=\zeta^j_0 \quad \text{and}~ \lim_{|\zeta|\rightarrow\infty}\eta^j=\eta^j_0.
\end{align}
We pick 3 pairs of CGO solutions $\{E^j_1,E^j_2\}_{1\leq j\leq 3}$ as defined in \eqref{CGO sol} and define the corresponding $\{\theta_j,\vartheta_j\}_{1\leq j\leq 3}$ by \eqref{theta}. From the estimate \eqref{large tau}, we deduce that $[\theta_1,\theta_2,\theta_3]$ is invertible for $|\zeta|$ sufficiently large. Therefore equation \eqref{transport theta} amounts to a redundant elliptic equation,
\begin{align}\label{redundant ellip}
\nabla\gamma+\beta(x)\gamma=0,
\end{align}
where $\beta=[\theta_1,\theta_2,\theta_3]^{-1}[\vartheta_1,\vartheta_2,\vartheta_3]$. Then $\gamma$ can be reconstructed using \eqref{redundant ellip} if it is known at one point on the boundary. Since we have to differentiate the measurements twice for the construction of $\beta$, there is a loss of two derivatives compared to $H$ for the reconstruction of $\gamma$ via \eqref{redundant ellip}. The stability estimate \eqref{eq:stability} obviously follows.
\end{proof}
\subsection{Runge approximation for the anisotropic Maxwell system}\label{sec runge}
To derive local reconstruction formulas for a more general $\gamma$, we need to control the local behavior of solutions by well-chosen boundary conditions. This is done by means of a Runge approximation. In this section, we will prove the Runge approximation for an anisotropic Maxwell system using the unique continuation property. For UCP and Runge approximation in our context, we refer the readers to, e.g., \cite{Lax1956, Nakamura2005}. 
\subsubsection{Unique continuation property}
Unique continuation property for an anisotropic Maxwell system with only real magnetic permeability $\epsilon$ has been proved in \cite{Eller2006}. We generalize the result to the case of a complex tensor $\gamma=\sigma+\i\omega\epsilon$ in \eqref{eq:Maxwell}. We recall the div-curl system as follows,
\begin{align}\label{div-curl}
\begin{split}
&\gamma(x)E(x)-\nabla\times H(x)=0, \quad \i\omega\mu(x)H(x)+\nabla\times E(x)=0, \\
&\div(\gamma(x)E(x))=0, \qquad \div(\mu(x)H(x))=0
\end{split}
\end{align}
We will use the Calder\'{o}n approach to derive a Carleman estimate which implies the unique continuation property across every $\C^2$-surface. For Calder\'{o}n approach, we refer the readers to \cite{Calderon1958, Nirenberg1973}.

\begin{lemma}[Basic Carleman inequality]\label{Basic carleman}
Let $(u(x,t),v(x,t))\in \C^1(B_{r}(x_0))^3$ with support contained in $|x|\leq r$, $0\leq t\leq T$. There is a constant $C$ independent of $(u,v)$ such that for $r$, $T$ and $k^{-1}$ sufficiently small, the following inequality holds 
\begin{align}\label{carleman main}
\int_0^T\|u,v\|w(t)dt \leq C(k^{-1}+T^2)\int_0^T \|P(u,v)\| w(t)dt.
\end{align}
where $P$ denotes the div-curl operator, 
\begin{align}\label{curl div op}
P(u,v)=(\i\omega\mu v+\nabla\times u,\gamma u-\nabla\times v,\div (\gamma u),\div (\mu v))
\end{align}
Here $\|\cdot\|$ denotes the $L^2$ norm with respect to x-variable, $w(t)=e^{k(T-t)^2}$ with $k$ a positive constant. Then if $(E,H)$ is a solution of the system \eqref{div-curl} in a neighborhood of the origin, vanishing identically for $t<0$, then $(E,H)=0$ in a full neighborhood of the origin. 
\end{lemma}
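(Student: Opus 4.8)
The plan is to follow the classical Calder\'on approach, treating the time variable $t$ (the defining function of the foliation of $\C^2$-surfaces, after a suitable change of coordinates flattening the surface) as the distinguished direction. First I would rewrite the div-curl operator $P$ in \eqref{curl div op} in the form $P(u,v) = \partial_t(u,v) + A(x,t,D_x)(u,v) + \text{(lower order)}$, where $A$ is a first-order tangential (pseudo)differential operator: the point is that the system \eqref{div-curl}, when one keeps both the curl equations and the two divergence constraints, is an \emph{overdetermined elliptic} system in the $x$-variables whose principal symbol in the normal direction can be normalized so that the tangential part $A$ becomes, after microlocal diagonalization, a matrix whose eigenvalues are $\pm|\xi'|$ (here $\xi'$ is the tangential covariable). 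The two divergence equations are exactly what is needed to kill the "null" directions of the bare curl operator and make this diagonalization possible — this is the structural input inherited from \cite{Eller2006}, and the only new feature here is that $\gamma$ is complex, which affects only the zeroth-order terms and therefore does not change the principal analysis.

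Next I would insert the exponential weight $w(t) = e^{k(T-t)^2}$ and work with $(\tilde u,\tilde v) = w^{1/2}(u,v)$, i.e. conjugate $P$ by $w^{1/2}$. Conjugation replaces $\partial_t$ by $\partial_t - k(T-t)$, so the weighted operator has the form $\partial_t - k(T-t) + A(x,t,D_x) + \text{l.o.t.}$ The key estimate is then the standard one-dimensional-in-$t$ energy inequality for a first-order system of the form $\partial_t - k(T-t) \pm |D_x'|$: one multiplies by the conjugate, integrates by parts in $t$ over $[0,T]$, and uses that the "large parameter" $k(T-t)$ dominates provided $T$ is small enough that the contributions of $A$ (whose symbol is $O(|\xi'|)$, i.e. of the same order as the good term only on the characteristic set) and of the lower-order terms (bounded, hence controlled by $C\cdot\|(u,v)\|$, absorbable for $k$ large) can be absorbed. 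Carrying out the Fourier transform in $x$ and a Littlewood--Paley–type splitting into the regions $|\xi'| \lesssim k(T-t)$ and $|\xi'| \gtrsim k(T-t)$ yields \eqref{carleman main} with the stated constant $C(k^{-1}+T^2)$: the $T^2$ records the loss from the region where the tangential derivatives are large relative to the weight, and the $k^{-1}$ records the absorption of the zeroth-order terms (where the complex part of $\gamma$ enters harmlessly). The compact support hypothesis in $|x|\le r$ for $r$ small is what allows freezing coefficients and treating $A$ as a small perturbation of its value at $x_0$.

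Finally, the unique continuation conclusion follows from \eqref{carleman main} by the usual argument: if $(E,H)$ solves \eqref{div-curl} near the origin and vanishes for $t<0$, one applies the inequality to $(u,v) = \chi(E,H)$ for a cutoff $\chi$ supported in a small ball, notes $P(u,v)$ is supported where $\nabla\chi\ne 0$ (away from the origin on the side $t>0$), and the exponential weight $w(t)=e^{k(T-t)^2}$, being largest near $t=0$, forces $(E,H)\equiv 0$ in a neighborhood of the origin after letting $k\to\infty$.

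\medskip

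\emph{Main obstacle.} The delicate point is the microlocal decoupling of the first-order tangential operator $A$ into its $\pm|\xi'|$ pieces uniformly in $(x,t)$ over the small ball, and the bookkeeping that shows the characteristic region contributes exactly the $T^2$ factor (and nothing worse). One must be careful that the symmetrizer used in the energy estimate is bounded with bounded inverse, which is where the ellipticity \eqref{positive definite} of $\sigma$ and $\varepsilon$ — hence invertibility of $\gamma$ — is used; and one must check that adding $\i\omega\varepsilon$ to $\sigma$ does not destroy this, which it does not because it only perturbs $\gamma$ within the invertible matrices. The rest is routine once the structure is set up as in \cite{Eller2006, Calderon1958, Nirenberg1973}.
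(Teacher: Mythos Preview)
Your overall Calder\'on--Nirenberg strategy matches the paper's, and your cutoff argument for the unique continuation conclusion is correct. But there is a genuine gap in the principal-symbol step, and it is exactly the step that carries the content of the lemma.

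You assert that the complex nature of $\gamma$ ``affects only the zeroth-order terms and therefore does not change the principal analysis,'' and that after diagonalization the tangential eigenvalues are $\pm|\xi'|$. Both claims are wrong for this system. In \eqref{curl div op} the tensor $\gamma$ enters through $\div(\gamma u)$, whose principal symbol $\sum_j\gamma_{ij}\xi_j$ is first order and carries the full complex anisotropic $\gamma$ into the principal part. The paper makes this explicit: treating $u$ alone via the overdetermined block $(\nabla\times u,\div(\gamma u))$, dropping the redundant curl row and inverting the coefficient of $D_3$, one finds the characteristic polynomial in the normal covariable $\alpha$ to be $p(x,\xi',\alpha)=\gamma_{33}^{-1}\alpha\,(\xi',\alpha)\gamma(\xi',\alpha)^T$, with roots $\alpha_1=0$ (real, simple) and two $\gamma$-dependent roots $\alpha_{2,3}$. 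Calder\'on's hypotheses demand that $\alpha_{2,3}$ be nonreal with $|\Im\alpha_{2,3}|\ge\epsilon>0$ and separated from each other and from $0$; verifying this is precisely where the uniform ellipticity \eqref{positive definite} of \emph{both} $\sigma=\Re\gamma$ and $\omega\varepsilon=\Im\gamma$ is used, via a Cauchy--Schwarz bound on $|(\gamma_{3j}\xi_j)^2-\gamma_{33}\gamma_{jk}\xi_j\xi_k|$. This root analysis is the new work relative to \cite{Eller2006}, and your plan bypasses it. Note also that the paper does not couple $u$ and $v$ at principal level: it proves the single-field estimate \eqref{carl 1} for $u$ from $(\nabla\times u,\div(\gamma u))$, repeats it for $v$ with $\mu$, and only then combines them by the triangle inequality, absorbing the genuinely zeroth-order cross terms $\i\omega\mu v$ and $\gamma u$ for $k^{-1}+T^2$ small. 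Your coupled reduction with eigenvalues $\pm|\xi'|$ does not reproduce this structure.
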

\begin{proof}
We first introduce the div-curl system, 
\begin{align}\label{eq:sys div}
L(x,D)=(\nabla\times u, \div(\gamma u))
\end{align}
where the principle symbol of $L$ is 
\begin{align}\label{L sys}
L(x,\xi)=
\left(\begin{array}{ccc}
0&-\xi_3&\xi_2\\
\xi_3&0&\xi_1\\
-\xi_2&\xi_1&0\\
\sum_{j=1}^3 \gamma_{1j}\xi_j&\sum_{j=1}^3 \gamma_{2j}\xi_j&\sum_{j=1}^3 \gamma_{3j}\xi_j
\end{array}\right)
\end{align}
Notice that the third curl equation does not involve any derivatives in $x_3$ direction, thus it can be dropped. Then we derive a square system, 
\begin{align}
\tilde{L}(x,\xi)=\left(\begin{array}{ccc}
\xi_3&0&\xi_1\\
0&\xi_3&-\xi_2\\
\gamma_{1j}\xi_j&\gamma_{2j}\xi_j&\gamma_{3j}\xi_j
\end{array}\right)
\end{align}
We rewrite the principal part of \eqref{eq:sys div} in the form $l(x,e_3)D_3u+\bar{L}(x,D')u$, where
\begin{align}
l(x,e_3)=\left(\begin{array}{ccc}
1&0&0\\
0&1&0\\
\gamma_{13}&\gamma_{23}&\gamma_{33}
\end{array}\right)
\end{align}
is invertible and $\bar{L}(x,D')u$ contains only the derivatives with respect to $x_1$ and $x_2$. Hence the equation \eqref{eq:sys div} can be rewritten as follows,
\begin{align}
D_3u+l^{-1}(x,e_3)\bar{L}(x,D')u=l^{-1}(x,e_3)\tilde{L}(x,D)u.
\end{align}
We then calculate the eigenvalues of $l^{-1}(x,e_3)\bar{L}(x,\xi')u$, namely the roots of $p(x,\xi', \alpha)=\det(\alpha\Imm+l^{-1}(x,e_3)\bar{L}(x,\xi'))$. We first list the standard hypotheses in Calder\'{o}n's approach: For $(x,t)$ in a neighborhood of the origin, and for every unit vector $\xi'$ in $\Rm^n$:
\begin{itemize}
  \item  $p(x,\xi', \alpha)$ has at most simple real roots $\alpha$ and at most double complex roots,
  \item distinct roots $\alpha_1$, $\alpha_2$ satisfy $\|\alpha_1-\alpha_2\|\geqq \epsilon >0$
  \item nonreal roots $\alpha$ satisfy $\|\Im\alpha\|\geqq \epsilon $
\end{itemize}
Here $\epsilon$ is some fixed positive constant. In the following, the summations will be from $1$ to $2$. 
\begin{align*}
p(x,\xi', \alpha)&=\det(l(x,e_3))^{-1}\det\tilde{L}(x,\xi',\alpha)\\
  &=\frac{1}{\gamma_{33}}\alpha(\gamma_{jk}\xi_j\xi_k+2\alpha\gamma_{3j}\xi_j+\gamma_{33}\alpha^2)\\
&=\frac{1}{\gamma_{33}}\alpha(\xi',\alpha)\gamma(\xi',\alpha)^T
\end{align*}
Hence the three roots of $p(x,\xi', \alpha)$ are:
\begin{align}
\alpha_1=0, \qquad \alpha_{2,3}=-\frac{\gamma_{3j}\xi_j}{\gamma_{33}}\pm \sqrt{(\frac{\gamma_{3j}\xi_j}{\gamma_{33}})^2-\frac{\gamma_{jk}\xi_j\xi_k}{\gamma_{33}}}
\end{align}
$\alpha_2$ and $\alpha_3$ satisfy the above hypothesis and the prove is essentially given in \cite[Lemma 17.2.5]{Hormander1994}. Since $\Re\gamma$ and $\Im\gamma$ are both positive definite, the roots $ \alpha_{2,3}$ are non-real, by noticing that $(\xi',\alpha)\gamma(\xi',\alpha)^T\neq 0$ for real $\alpha$. Then $|\alpha_1-\alpha_2|^2=4|(\frac{\gamma_{3j}\xi_j}{\gamma_{33}})^2-\frac{\gamma_{jk}\xi_j\xi_k}{\gamma_{33}}|=\frac{4}{|\gamma_{33}|^2}|(\gamma_{3j}\xi_j)^2-\gamma_{33}\gamma_{jk}\xi_j\xi_k|$. A simple calculation shows that,
\begin{align}
|(\gamma_{3j}\xi_j)^2-\gamma_{33}\gamma_{jk}\xi_j\xi_k| & \geqq |\Im((\gamma_{3j}\xi_j)^2-\gamma_{33}\gamma_{jk}\xi_j\xi_k))|\\
&=|e_3^T\tau e_3\cdot\xi'^T\epsilon\xi'+e_3^T\epsilon e_3\cdot\xi'^T\tau\xi'-2e_3^T\tau\xi'\cdot e_3^T\epsilon\xi'|
\end{align}
which is obviously strictly positive for a unit vector $\xi'$ by the Cauchy-Schwarz inequality and the fact that $e_3=(0,0,1)$ and $\xi'=(\xi_1,\xi_2,0)$ are not collinear. Then we obtain a Carleman type inequality, see \cite[Page 33]{Nirenberg1973},
\begin{align}\label{carl 1}
\int_0^T\|u\|w(t)dt \leq C(k^{-1}+T^2)\int_0^T (\|\nabla\times u)\|+\|\div (\gamma u)\| )w(t)dt.
\end{align}
Here $u$ is compactly support in a neighborhood of the origin and $k^{-1}$ and $T$ are sufficiently small. Applying the same analysis to $v$ and using Cauchy-Schwarz, we have the following estimate,
\begin{align}\label{carl 2}
\int_0^T\|u,v\|w(t)dt & \leq C(k^{-1}+T^2)[\int_0^T (\| \i\omega\mu v+\nabla\times u)\|+\|\gamma u-\nabla\times v\|)w(t)dt\\
&+\int_0^T(\|u,v\|+\|\div (\gamma u)\|+ \|\div (\mu v)\|)w(t)dt].
\end{align}
The term $\|u,v\|$ can be moved to the RHS by choosing $k^{-1}$ and $T$ sufficiently small. We thus get the Carleman estimate,
\begin{align}
\int_0^T\|u,v\|w(t)dt \leq C(k^{-1}+T^2)\int_0^T \|P(u,v)\| w(t)dt,
\end{align}
where $P$ denote the div-curl operator in \eqref{curl div op}. Now suppose $z=(E,H)$ satisfies $Pz=0$. Let $\zeta(t)$ be a nonnegative smooth function defined in $t\geqq 0$ equal to 1 for $t\leq 2T/3$ and $0$ for $t\geqq T$. By applying \eqref{carleman main} to $(u,v)=\zeta z$, we obtain that,
\begin{align}
\int_0^{\frac{2T}{3}}\|z\|^2wdt\leq C(k^{-1}+T^2)\int_{\frac{2T}{3}}^T \|P(\zeta z)\|^2wdt \leq  C'(k^{-1}+T^2)\int_{\frac{2T}{3}}^T wdt
\end{align}
with some fixed constant $T$ and $C'$ independent of $k$. Thus, we obtain,
\begin{align}
e^{kT^2/4}\int_0^{\frac{T}{2}}\|z\|^2dt\leq C'(k^{-1}+T^2)Te^{kT^2/9}
\end{align}
Letting $k\rightarrow\infty$, we see that $z=0$ for $t\leq T/2$.
\end{proof}
Due to the above lemma, we may generalize the unique continuation property to the Maxwell system with a complex tensor $\gamma$, which is a more general case of \cite[Corollary 1.2]{Eller2006} but requires more smoothness of the coefficients in order to apply the Calder\'on machinery. We formulate it as the following theorem.
\begin{theorem}\label{UCP theorem}
Let $(E,H)\in H^1(X)$ satisfying the Maxwell's equation \eqref{eq:Maxwell} and let $S=\{\Phi(x)=\Phi(x_0)\}$ be a level surface of the function $\Phi\in \C^2(\bar{X})$ near $x_0\in X$ such that $\nabla \Phi(x_0)\neq 0$. If $(E,H)$ vanish on one side of $S$, then $(E,H)=0$ in a full neighborhood of $x_0\in X$.
\end{theorem}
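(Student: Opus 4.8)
The plan is to deduce Theorem~\ref{UCP theorem} from the Basic Carleman inequality of Lemma~\ref{Basic carleman} by the standard Calder\'on flattening argument. First I would reduce to the model situation of Lemma~\ref{Basic carleman}: near $x_0$ introduce new coordinates $(x',t)$ in which the level surface $S=\{\Phi=\Phi(x_0)\}$ becomes the hyperplane $\{t=0\}$ and the side on which $(E,H)$ vanishes becomes $\{t<0\}$; since $\Phi\in\C^2$ and $\nabla\Phi(x_0)\neq 0$, this change of variables is $\C^1$ and preserves the structural form of the div-curl system \eqref{div-curl}, only changing the (still smooth, still complex) coefficients $\gamma,\mu$ and the principal symbol $l(x,e_3)$, $\bar L(x,D')$ — importantly, the key algebraic facts used in Lemma~\ref{Basic carleman} (invertibility of $l(x,e_3)$, non-reality and separation of the roots $\alpha_{2,3}$, the root $\alpha_1=0$) are invariant under such a change because they come from the ellipticity of $\Re\gamma$ and $\Im\gamma$, which is coordinate-free.

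Next I would localize: multiply $(E,H)$ by a cutoff $\chi(x')$ supported in $|x'|\le r$ and equal to $1$ near $x'=0$, and separately by the temporal cutoff $\zeta(t)$ (equal to $1$ for $t\le 2T/3$, $0$ for $t\ge T$) exactly as in the proof of the lemma. Because $(E,H)$ vanishes for $t<0$, the function $z=\zeta\chi(E,H)$ is admissible for \eqref{carleman main}, $Pz$ is supported in the annular region where the cutoffs have nonzero gradient, and on the part $\{t\ge 2T/3\}$ the weight $w(t)=e^{k(T-t)^2}$ is bounded by $e^{kT^2/9}$ while on $\{t\le T/2\}$ it is at least $e^{kT^2/4}$. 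Feeding this into \eqref{carleman main} and discarding the contribution of the spatial cutoff's gradient (which can be absorbed by shrinking $r$, or handled by a Runge-type iteration on nested balls), I get
\begin{align*}
e^{kT^2/4}\int_0^{T/2}\|z\|^2\,dt \le C'(k^{-1}+T^2)\,T\,e^{kT^2/9},
\end{align*}
and letting $k\to\infty$ forces $z\equiv 0$, hence $(E,H)=0$ for $t\le T/2$ in $|x'|<r$, i.e.\ in a full neighborhood of $x_0$. This is precisely the last two displays of the proof of Lemma~\ref{Basic carleman}, so most of the work is already packaged there.

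The two steps that require care — and the main obstacle — are, first, checking that the $\C^2$ change of variables genuinely preserves the Calder\'on hypotheses on the roots of $p(x,\xi',\alpha)$ uniformly in a neighborhood (the constant $\epsilon$ in the separation and imaginary-part bounds must survive the pullback, which it does because the flattened coefficient tensor is a $\C^1$-small perturbation of a positive-definite one near $x_0$), and second, the handling of the commutator terms generated by the spatial cutoff $\chi(x')$: these are supported away from $x_0$ but are not small pointwise, so the clean absorption used in Lemma~\ref{Basic carleman} for the term $\|u,v\|$ does not directly apply. The standard fix is to run the estimate on a shrinking family of balls and propagate the vanishing set outward by a connectedness/continuity argument, or equivalently to note that the Carleman weight is convex in $t$ so the commutator contribution, being confined to $t>0$ with the weight much smaller there than at $t\le T/2$, is still killed in the limit $k\to\infty$. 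Everything else is routine bookkeeping, and the regularity hypothesis $(E,H)\in H^1(X)$ together with $\Phi\in\C^2$ is exactly what is needed to justify the integrations by parts underlying \eqref{carleman main}.
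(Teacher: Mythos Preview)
Your proposal is correct and follows the same route as the paper: flatten the level surface via the change of variables $t=\Phi(x)-\Phi(x_0)$, observe that ellipticity of $\Re\gamma,\Im\gamma$ is coordinate-free so the Calder\'on root hypotheses persist, and then run the weighted Carleman argument already packaged in Lemma~\ref{Basic carleman}. The paper's own proof is a two-line sketch referring back to Lemma~\ref{Basic carleman} and to \cite{Eller2006}; you have in fact supplied more detail than the paper does, including the spatial-cutoff commutator issue, which the paper leaves implicit.
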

\begin{proof}
The proof is analogue to Lemma \ref{Basic carleman} by introducting new coordinates $x_3=\Phi(x)-\Phi(x_0)$, in which the level surfaces of $\Phi$ becomes $\{x_3=0\}$. By the ellipticity property of Maxwell's equations, the analysis of the new system can be returned to the original one. See, for example, \cite{Eller2006} for details.
\end{proof}
\subsubsection{Proof of Runge approximation property}
The Runge approximation can be proved with the unique continuation property of Maxwell's system, since we have the uniqueness of the Cauchy problem near every direction. The prove of the following theorem follows the idea in \cite{Nakamura2005}.
\begin{theorem}[Runge approximation]\label{RAP}
Let $X_0$ and $X$ be two bounded domains with smooth boundary such that $\bar{X_0}\subset X$. Let $(E_0,H_0)\in H^1(X_0)$ locally satisfy the Maxwell's equations \eqref{div-curl},
\begin{align}
P(E,H)=0 \quad (X_0) .
\end{align} 
Then for each $\epsilon>0$, there is a function $f_{\epsilon}\in TH_{\Div}^{\frac{1}{2}}(\partial X)$ such that the solutions $(E_{\epsilon},H_{\epsilon})\in H^1(X)^3$ satisfy,
\begin{align}
P(E_{\epsilon},H_{\epsilon})=0  \quad (X),  \quad \nu\wedge E_{\epsilon}|_{\partial X}=f_{\epsilon}
\end{align}
Moreover, for a compact subset $K\subset X_0$,
\begin{align}
\|E_{\epsilon}-E_0\|_{H^1(K)}\leq \epsilon.
\end{align}
\end{theorem}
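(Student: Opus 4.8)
The plan is to run the classical Runge-type duality argument (see \cite{Lax1956,Nakamura2005}), with the unique continuation property of Theorem~\ref{UCP theorem} as the only genuinely analytic input. First I reduce to a second-order formulation: eliminating $H$ from \eqref{div-curl} via $H=-(\i\omega\mu_0)^{-1}\nabla\times E$, a pair $(E,H)$ solves the Maxwell system if and only if $E$ solves the curl-curl equation $LE:=\nabla\times\nabla\times E+\i\omega\mu_0\gamma E=0$, and, because $\gamma$ is symmetric, $L$ is formally self-adjoint for the bilinear pairing $(u,v)\mapsto\int u\cdot v\,dx$. Put $\R:=\{\,E|_{X_0}\,:\,(E,H)\in H^1(X)^3,\ P(E,H)=0\ \text{in}\ X\,\}$, a linear subspace of $H^1(K)^3$ for each compact $K\subset X_0$. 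By the Hahn--Banach theorem it suffices to show that any bounded functional $\ell$ on $H^1(K)^3$ which annihilates $\R$ also annihilates every local solution $E_0$. Represent $\ell$ by a distribution $\mathbf J\in(H^{-1}(\Rm^3))^3$ with $\supp\mathbf J\subset\bar K\subset\subset X_0$, so that $\ell(\phi)=\langle\mathbf J,\phi\rangle$.

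Next I would solve the adjoint boundary value problem $LE'=\mathbf J$ in $X$ with the homogeneous electric condition $\nu\times E'|_{\partial X}=0$; equivalently, with $H':=-(\i\omega\mu_0)^{-1}\nabla\times E'$, the pair $(E',H')$ solves the Maxwell system with the interior current source $-(\i\omega\mu_0)^{-1}\mathbf J$ and $\nu\times E'|_{\partial X}=0$. Since $\omega$ is not a resonance frequency this problem is uniquely solvable (Fredholm alternative; cf.\ \cite{Kenig2011}), and $E'$ is smooth away from $\supp\mathbf J$. For any global solution $E$ of $LE=0$ with $\nu\times E|_{\partial X}=f$, Green's identity for the formally self-adjoint operator $L$ (using $LE=0$ and $LE'=\mathbf J$) gives
\begin{align*}
0=\ell(E)=\langle\mathbf J,E\rangle=\int_{\partial X}\big((\nu\times\nabla\times E)\cdot E'-(\nu\times\nabla\times E')\cdot E\big)\,dS .
\end{align*}
The first term vanishes since $\nu\times E'=0$ on $\partial X$; the second equals, up to sign, $(\nabla\times E')\cdot(\nu\times E)=(\nabla\times E')\cdot f$, i.e.\ the pairing of $\nu\times H'$ against the tangential field $f$. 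Letting $f$ range over $TH_{\Div}^{\frac12}(\partial X)$ forces $\nu\times H'|_{\partial X}=0$ as well, so the full Cauchy data of $(E',H')$ vanishes on $\partial X$.

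Finally I would propagate this by unique continuation. Assume (as is standard, and needed for the conclusion) that every connected component of $X\setminus\bar X_0$ reaches $\partial X$. Extending $(E',H')$ by zero across $\partial X$ (legitimate since both tangential traces vanish) and sweeping $X\setminus\bar X_0$ by a chain of $\C^2$ level surfaces, Theorem~\ref{UCP theorem} yields $(E',H')\equiv0$ on $X\setminus\bar X_0$, and one further application across $\partial X_0$ gives $E'\equiv0$ in a neighbourhood of $\partial X_0$ inside $X_0$. By interior elliptic regularity the local solution $E_0$ is smooth on $X_0$, so, inserting a cutoff that equals $1$ near $\supp\mathbf J$ and is supported where $E'$ vanishes, integration by parts in $X_0$ produces no boundary contribution and, since $LE_0=0$,
\begin{align*}
\ell(E_0)=\langle\mathbf J,E_0\rangle=\langle LE',E_0\rangle=\langle E',LE_0\rangle=0 .
\end{align*}
Hence $\ell$ annihilates every local solution; by Hahn--Banach, $\R$ is dense in the space of local solutions in the $H^1(K)$ topology, and choosing $f_\epsilon$ to be the boundary data of a suitable approximant gives $\|E_\epsilon-E_0\|_{H^1(K)}\le\epsilon$. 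I expect the main obstacle to be the duality plumbing rather than the unique continuation, which Theorem~\ref{UCP theorem} already provides: one must establish unique solvability of the adjoint problem at the non-resonant $\omega$ with a rough ($H^{-1}$) source, and carry out the curl-curl Green's identity carefully enough to conclude that the \emph{entire} Cauchy data of $E'$ on $\partial X$ must vanish; one should also keep track of the mild topological hypothesis on $X\setminus\bar X_0$ that makes the continuation propagate all the way from $\partial X$ to $\partial X_0$.
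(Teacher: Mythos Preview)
Your argument is essentially the same Hahn--Banach/adjoint/UCP scheme the paper runs, and it is correct. Two minor deviations are worth flagging. First, the paper reduces the $H^1(K)$ statement to an $L^2(\tilde K)$ density statement (for a slightly larger compact $\tilde K\subset X_0$) via the interior estimate $\|E_\epsilon-E_0\|_{H^1(K)}\le C\|E_\epsilon-E_0\|_{L^2(\tilde K)}$ before invoking Hahn--Banach; this lets the annihilating functional be an honest $L^2$ function, so the adjoint problem has an $L^2$ source and no $H^{-1}$ bookkeeping is needed. Second, the paper uses the sesquilinear adjoint $L^*=\nabla\times\mu^{-1}\nabla\times\,+\,\i\omega\gamma^*$ rather than your bilinear self-adjointness of $L$; both work since $\gamma$ is symmetric. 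The paper also spells out in coordinates why $\nu\times E'=\nu\times(\mu^{-1}\nabla\times E')=0$ on $\partial X$ forces the full first-order jet of $(E',H')$ to vanish there, which is what licenses the extension by zero you invoke.
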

\begin{proof}
We rewrite Maxwell's equations \eqref{div-curl} into the following Helmholtz-type equation,
\begin{align}
L(E):=\nabla\times\mu^{-1}\nabla\times E+\i\omega\gamma E=0.
\end{align}
Applying the interior estimate to solutions of Maxwell's equations (see \cite{Weber1981}), we get the local estimate, 
\begin{align}
\|E_{\epsilon}-E_0\|_{H^1(K)}\leq C\|E_{\epsilon}-E_0\|_{L^2(\tilde{K})}
\end{align}
for some constant $C>0$ and where $\tilde K\subset X_0$ is a compact containing $K$. Therefore, we wish to prove that,
\begin{align}
M=\{w:w=u|_{\tilde K}, u\in H^1(X), Lu=0  ~\text{in} ~ X\}
\end{align}
is dense in 
\begin{align}
N=\{w:w=u|_{\tilde K}, u\in H^1(X_0), Lu=0  ~\text{in} ~ X_0\}
\end{align}
for the strong $L^2$ topology. By Hahn Banach theorem, this means that for all $f\in L^2(\tilde K)$ such that,
\begin{align}
(f,w)_{L^2(\tilde K)}=0 \quad \text{for all}~ w~ \text{in}~M
\end{align}
this implies that
\begin{align}
(f,w)_{L^2(\tilde K)}=0 \quad \text{for all}~ w~ \text{in}~N.
\end{align}
We extend $f$ ouside $\tilde K$ and still call it $f$ as the extension on $X_0$. Define then
\begin{align}\label{eq:extension}
L^*E=f ~\text{on}~X, \quad n\wedge E=0 ~\text{on}~ \partial X
\end{align}
where $L^*=\nabla\times\mu^{-1}\nabla\times +\i\omega\gamma^{*} $ denotes the adjoint to $L$. For any $u\in H^1(X)$ satisfying $Lu=0$ on $X$, integrations by parts show that,
\begin{align}
(f,u)_{L^2(\tilde K)}=\int_{\tilde K} f\cdot u^*d\sigma=\int_{X} L^*E\cdot u^* dx=\int_{\partial X} n\wedge (\mu^{-1}\nabla\times E)\cdot u^*d\sigma=0.
\end{align}
Then we deduce that $\nu\wedge (\mu^{-1}\nabla\times E)=0$ on $\partial X$. Combining with equation \eqref{eq:extension}, we obtain,
\begin{align}
L^*E=0 ~\text{on}~ X\backslash\tilde K, \quad \nu\wedge E=\nu\wedge (\mu^{-1}\nabla\times E)=0~ \text{on}~ \partial X.
\end{align}
Recalling that $H=\frac{\i}{\omega}\mu^{-1}\nabla\times E$, we will prove that $(E,H)$ together with all their first order derivatives vanish on $\partial X$, so that the solution can be extended by 0 outside the domain $X$. With a local diffeomorphism, we restrict $\partial X$ on a neighborhood of the plan $x_3=0$ for simplicity. In this particular case, $\nu=e_3$ and $\nu\wedge E=0$ means that,
\begin{align}\label{3rd zero}
E^1=E^2=0 \quad \text{on}~\partial X
\end{align}
where $E^i$ denotes the i-th component of $E$. Moreover, the third component of $\nabla\times E$ vanishes on $\partial X$,
\begin{align}\label{third curl}
\nu\cdot \nabla\times E=\partial_1 E^2-\partial_2 E^1=0 \quad \text{on}~\partial X
\end{align}
by the fact that $\partial_1 E^2-\partial_2 E^1$ concerns only the tangential derivatives of $E^1,E^2$, which vanish on the boundary. As for \eqref{3rd zero}, $\nu\wedge (\mu^{-1}\nabla\times E)=0$ implies that the first and second components of $\mu^{-1}\nabla\times E$ are both zero. Together with \eqref{third curl} and the fact that $\mu^{-1}$ is positive definite, we infer that,
\begin{align}
\nabla\times E=0 \quad \text{on}~\partial X.
\end{align}
Therefore $H=\frac{\i}{\omega}\mu^{-1}\nabla\times E=0$ on $\partial X$. Recalling that $L^*E=\nabla\times\mu^{-1}\nabla\times E+\i\omega\gamma^{*}E=0$ on $X\backslash\tilde K$, we obviously have,
\begin{align}
 \nabla\times H=\gamma^*E \quad \text{on} ~\partial X.
\end{align} 
Since the third component of $\nabla\times H$ only concerns the tangential derivatives, it has to vanish. Then by \eqref{3rd zero} and the fact that $\gamma_{33}\neq 0$, we have the following equality,
\begin{align}
\nabla\times H=E=0 \quad \text{on}~\partial X.
\end{align}
Since the tangential derivatives of $H$ are both zero on the boundary, 
\begin{align}
\nabla\times H=(\partial_2 H^3-\partial_3 H^2,\partial_3 H^1-\partial_1 H^3,\partial_1 H^2-\partial_2 H^1)=0 \quad \text{on}~\partial X
\end{align}
implies that 
\begin{align}\label{H1 H2}
\partial_3 H^1=\partial_3 H^2=0  \quad \text{on}~\partial X.
\end{align}
Noticing that $\div (\mu H)=0$ and $H=0$ on the boundary $\partial X$, we get,
\begin{align}
\sum_{1\leq i,j\leq 3}\partial_i(\mu_{ij}H^j)=\mu_{33}\partial_3 H^3=0  \quad \text{on}~\partial X.
\end{align}
Together with \eqref{H1 H2} and $\mu_{33}\neq 0$, this implies that $\partial_3 H^1=\partial_3 H^2=\partial_3 H^3=0$. Applying the same calculations for $E$ and its first order derivatives as above, we have
\begin{align}
\nabla\times E= \div (\gamma^* E)=0 \quad \text{on}~\partial X.
\end{align}
We deduce that all first-order derivatives of $E$ and $E$ itself vanish on $\partial X$. Thus $(E,H)$ can be extended to $0$ outside $\partial X$. By the unique continuation property in Theorem \ref{UCP theorem}, we conclude that $E=0$ on $X\backslash\tilde K$. So for any $u\in H^1(X_0)$ with $Lu=0$ in $X_0$, we have,
\begin{align}
\int_{\tilde K} f\cdot u^* dx=\int_{X_0} L^*E\cdot u^*=0,
\end{align}
which completes the proof.
\end{proof}
\begin{remark}
\label{rem:UCP} In the above analysis of UCP and Runge approximation, the magnetic permeability $\mu$ in the Maxwell system \eqref{div-curl} can be any uniformly elliptic tensor, but not necessarily a constant scalar $\mu_0$ as imposed at the beginning of this paper. 
\end{remark}
The next corollary shows that the Runge approximation can be applied to more regular spaces, such as H\"{o}lder space. 
\begin{corollary}
Let $X_0\subset X$ be a bounded domain with smooth boundary. With same hypotheses as Theorem \ref{RAP}, there is a open subset $X'\subset X_0$ such that for any $\epsilon$
\begin{align}
\|E_{\epsilon}-E_0\|_{\C^{1,\alpha}(X')}\leq \epsilon
\end{align}
where $E_0$, $E_{\epsilon}$ satisfy the Maxwell equations \eqref{eq:Maxwell} on $X_0$ and $X$, respectively.
\end{corollary}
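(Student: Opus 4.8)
The plan is to upgrade the $H^1$ Runge approximation of Theorem~\ref{RAP} to a $\C^{1,\alpha}$ approximation on a smaller, \emph{fixed} subdomain by invoking interior elliptic regularity for the Maxwell operator. First I would fix, once and for all (independently of $\epsilon$), open sets with smooth boundary $X'\Subset X''\Subset X_0$; the set $X'$ is the one claimed in the statement, and it is chosen before any $\epsilon$ is prescribed.

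The key analytic step is the following interior estimate: if $w\in H^1(X_0)$ solves the second-order system $Lw := \nabla\times\mu_0^{-1}\nabla\times w + \i\omega\gamma w = 0$ on $X_0$ together with the constraint $\div(\gamma w)=0$, then there is a constant $C = C(X',X'',\gamma,\mu_0,\omega)$ such that $\|w\|_{\C^{1,\alpha}(X')}\le C\,\|w\|_{L^2(X'')}$. This holds because the \emph{augmented} div--curl system (the equation $Lw=0$ together with the scalar constraint $\div(\gamma w)=0$) is Douglis--Nirenberg elliptic, so with the smooth coefficient $\gamma$ and the constant $\mu_0$ one may bootstrap $w\in H^1 \to H^2_{\mathrm{loc}} \to \cdots \to H^k_{\mathrm{loc}}$ over a finite chain of nested neighborhoods interpolated between $X'$ and $X''$, and then use the Sobolev embedding $H^k(X')\hookrightarrow \C^{1,\alpha}(X')$ for $k$ large; the constants from the successive steps multiply to give the stated bound. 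The difference $w_\epsilon := E_\epsilon - E_0$ of two solutions of Maxwell's equations on $X_0$ satisfies exactly these hypotheses, since each of $E_\epsilon,E_0$ solves $LE=0$ and automatically obeys $\div(\gamma E)=0$ there.

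Finally I would assemble the estimate. Given $\epsilon>0$, apply Theorem~\ref{RAP} with the compact set $K = \overline{X''}$ and tolerance $\delta := \epsilon/C$: this yields a boundary datum $f_\delta\in TH_{\Div}^{\frac{1}{2}}(\partial X)$ and a global Maxwell solution $(E_\delta,H_\delta)$ on $X$ with $\|E_\delta - E_0\|_{H^1(X'')}\le\delta$, hence in particular $\|E_\delta - E_0\|_{L^2(X'')}\le\delta$. Applying the interior estimate to $w = E_\delta - E_0$ gives $\|E_\delta - E_0\|_{\C^{1,\alpha}(X')}\le C\delta = \epsilon$, which is the desired conclusion after relabelling $E_\delta$ as $E_\epsilon$ and $f_\delta$ as $f_\epsilon$.

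The main obstacle is the interior regularity estimate itself: the curl--curl operator alone is not elliptic --- its principal symbol $|\xi|^2\mathrm{Id}-\xi\xi^T$ annihilates $\xi$ --- so one must exploit that genuine solutions satisfy the gauge-type condition $\div(\gamma E)=0$ to recover ellipticity of the combined system, and one must check that the only limitation on the regularity gain is the smoothness of the coefficients, which is harmless here since $\gamma\in\C^\infty$ and $\mu_0$ is constant. If instead one wanted to allow the non-constant uniformly elliptic $\mu$ of Remark~\ref{rem:UCP}, one would have to require $\mu\in\C^{1,\alpha}$ so that the Schauder-type estimate still delivers $\C^{1,\alpha}$ control of $w$; this is precisely what dictates the sharp hypotheses.
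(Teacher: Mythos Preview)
Your proposal is correct and follows essentially the same route as the paper: both arguments bootstrap the $H^1$ Runge approximation of Theorem~\ref{RAP} into a $\C^{1,\alpha}$ estimate on a fixed smaller set via interior elliptic regularity for the Maxwell operator, and then invoke Sobolev embedding. The only cosmetic difference is that the paper carries out the regularity gain by repeatedly differentiating the equation $\nabla\times\nabla\times v+\i\omega\mu_0\gamma v=0$ and applying Weber's interior estimate at each step, whereas you package the same mechanism as Douglis--Nirenberg ellipticity of the augmented system $(Lw=0,\ \div(\gamma w)=0)$; your explicit acknowledgment that $\nabla\times\nabla\times$ alone is not elliptic and that the divergence constraint is what restores ellipticity is in fact a point the paper's proof leaves implicit.
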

\begin{proof}
Recall that $E^{\epsilon}$ and $E_0$ satisfy the equations,
\begin{align}
\nabla\times\nabla\times E^{\epsilon}+\i\omega\mu_0\gamma E^{\epsilon}=\nabla\times\nabla\times E_0+\i\omega\mu_0\gamma E_0=0 \quad (X_0).
\end{align}
Let $v=E_{\epsilon}-E_0$, then $v$ also satisfy the equation
\begin{align}\label{v h}
\nabla\times\nabla\times v+\i\omega\mu_0\gamma v=0 \quad (X_0).
\end{align}
Differentiating \eqref{v h} with respect to $x_j$ for $1\leq j\leq 3$, we obtain,
\begin{align}
\nabla\times\nabla\times \partial_j v+\i\omega\mu_0\gamma \partial_j v=-\i\omega\mu_0\partial_j\gamma v \quad (X_0)
\end{align}
where the operator $\partial_j$ denotes the $x_j$-derivative applied on each component of $v$ and $\gamma$. Recalling the local estimate $\|v\|_{H^1(K)}\leq \epsilon$ in Theorem \ref{RAP}, with the interior estimate and the smoothness of $\gamma$, we deduce
\begin{align}
\|\partial_j v\|_{H^1(X')}\leq C \|\partial_j\gamma v\|_{H^1(X')}\leq C'\epsilon
\end{align}
where $X'$ is contained in $K$. We iterate the above procedure such that $s>\frac{5}{2}$. By applying Sobolev embedding theorem, we obtain the following estimate,
\begin{align}
\|v\|_{\C^{1,\alpha}(X')}\leq C\|v\|_{H^s(X')}\leq C''\epsilon
\end{align}
which completes the proof.
\end{proof}
\subsection{Local reconstructions with redundant measurements}\label{arbitrary tensor}
In this section, we will show that local reconstructions are possible for a more general $\gamma$ than presented in earlier sections. The linear independence of the matrices in Hypothesis \ref{main hypo} becomes local. If in addition, $\gamma$ is in the $C^{1,\alpha}(X)$ vicinity of a constant tensor $\gamma_0$ on some open domain $X'\subset X$, Hypothesis \ref{2 hypo} also holds locally. We thus need to use potentially more than $6$ internal magnetic fields, although we do not expect this large number of measurements to be necessary in practice. The control of linear independence from the boundary relies on the Runge approximation in Theorem \ref{RAP}. This scheme was used in \cite{Guo2012a, Bal2012}.

\begin{theorem}\label{general tensor}
Let $X\subset \Rm^n$ a smooth domain and $\gamma$ a smooth tensor. Then for any $x_0\in X$, there exists a neighborhood $X'\subset X$ of $x_0$ and $6$ solutions of \eqref{eq:Maxwell} such that Hypothesis \ref{main hypo} holds. Moreover, if $\gamma$ is in the $C^{1,\alpha}$ vicinity of $\gamma(x_0)$, then Hypothesis \ref{2 hypo} also holds locally on some open domain $X_0\subset X$.
\end{theorem}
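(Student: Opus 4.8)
The plan is to combine the explicit frozen-coefficient constructions of Section~\ref{sec const} with an interior perturbation argument and the Runge approximation of Theorem~\ref{RAP}, used in the $C^{1,\alpha}$ form provided by the corollary following it.

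I would first establish Hypothesis~\ref{main hypo}. Freeze the coefficient at $x_0$ by setting $\gamma_0:=\gamma(x_0)$. By Lemma~\ref{basic sol} applied to the constant tensor $\gamma_0$ there are plane-wave solutions $E_1^0,E_2^0,E_3^0$ of the constant-coefficient Maxwell system with $\det(E_1^0,E_2^0,E_3^0)(x_0)\ne 0$, hence, by continuity, $|\det(E_1^0,E_2^0,E_3^0)|\ge 2c_0>0$ on a small ball $B=B_r(x_0)$. On $B$ the interior boundary value problem for \eqref{eq:Maxwell} with the true coefficient $\gamma$ is well posed (no resonance occurs since $\sigma$ is uniformly elliptic), so one may solve \eqref{eq:Maxwell} on $B$ with boundary data $\nu\times E_j^0|_{\partial B}$ and obtain true solutions $\widehat E_j$. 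The difference $w_j:=\widehat E_j-E_j^0$ then solves $\nabla\times\nabla\times w_j+\i\omega\mu_0\gamma w_j=\i\omega\mu_0(\gamma_0-\gamma)E_j^0$ on $B$ with $\nu\times w_j|_{\partial B}=0$, and its right-hand side is $C^0$-small on $B$ because $\gamma$ is continuous and $\gamma(x_0)=\gamma_0$; interior elliptic estimates for Maxwell's equations (see \cite{Weber1981}) then make $\widehat E_j$ as close as desired to $E_j^0$ in $C^0$ on a slightly smaller concentric ball $B'$, so that $|\det(\widehat E_1,\widehat E_2,\widehat E_3)|\ge c_0>0$ on $B'$. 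Finally the Runge approximation property (Theorem~\ref{RAP} and its corollary) produces global solutions $E_1,E_2,E_3$ of \eqref{eq:Maxwell} on $X$ whose restrictions to some open $X'\subset B'$ are $C^{1,\alpha}$-close to $\widehat E_1,\widehat E_2,\widehat E_3$; taking the approximation error small enough keeps $|\det(E_1,E_2,E_3)|\ge c_0/2>0$ on $X'$, which is Hypothesis~\ref{main hypo}.

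For the second assertion I would run the same perturbation-plus-Runge scheme starting from the six frozen solutions of Lemma~\ref{sol full rank}, namely the three plane waves of \eqref{indep sol} (which also furnish Hypothesis~\ref{main hypo}) together with the three modulated plane waves of \eqref{choose add sol}, for which the frozen space $\mathcal{W}^0=\{(\Omega Z_k^0(Y^0)^T)^{sym}\,|\,\Omega\in A_3(\mathbb{R}),\ 1\le k\le 3\}$ has full rank in $S_3(\mathbb{C})$ at every point. Full rank means that there are six elements $W_1^0,\dots,W_6^0$ of $\mathcal{W}^0$ with $\det\{\langle W_p^0,\bfw_q\rangle\}_{1\le p,q\le 6}\ne 0$, an open condition; so it suffices to show that the matrices $Z_kY^T$ built from the perturbed global solutions are $C^0$-close, on some open $X_0\subset X'$, to the frozen matrices $Z_k^0(Y^0)^T$. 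Here $Y_i=\gamma E_i$ is $C^0$-close to $\gamma_0 E_i^0$ once $E_i$ is $C^0$-close to $E_i^0$ and $\gamma$ is $C^0$-close to $\gamma_0$, while $Z_{k,i}=\nabla\lambda^k_i$ is a rational expression in $E_i$, $\nabla E_i$ and $\nabla\gamma$; the $C^{1,\alpha}$ Runge corollary controls $E_i$ and $\nabla E_i$, but the term carrying $\nabla\gamma$ has no frozen analogue --- this is exactly why the extra hypothesis that $\gamma$ lie in a $C^{1,\alpha}$ neighborhood of $\gamma(x_0)$ is imposed: it forces $\nabla\gamma$ to be small on $X_0$, so that term is negligible. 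With that hypothesis, $Z_kY^T\to Z_k^0(Y^0)^T$ in $C^0(X_0)$, hence $\mathcal{W}$ inherits the full-rank property on $X_0$, which is Hypothesis~\ref{2 hypo}.

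The main obstacle is the bookkeeping in the last step: one must check that the first-order quantities $Z_k=\nabla\lambda^k$ extracted from the perturbed global solutions really do converge to those computed from the explicit plane waves of Lemma~\ref{sol full rank}. This needs the $C^{1,\alpha}$ (rather than merely $H^1$) Runge approximation, and it is precisely the presence of $\nabla\gamma$ in $Z_k$ that blocks the argument for a general smooth $\gamma$ and forces the $C^{1,\alpha}$-closeness hypothesis; once these two ingredients are in hand, openness of the full-rank condition in $S_3(\mathbb{C})$ closes the argument with no further analytic input.
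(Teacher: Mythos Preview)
Your proposal is correct and follows essentially the same route as the paper's own proof: freeze $\gamma$ at $x_0$, use the explicit plane-wave solutions of Section~\ref{sec const} for the constant-coefficient system, solve the true system on a small ball $B_r(x_0)$ with the frozen solutions as boundary data, estimate the difference via the source term $\i\omega\mu_0(\gamma_0-\gamma)E_i^0$, and then globalize by the $C^{1,\alpha}$ Runge approximation. Your identification of why the $C^{1,\alpha}$ closeness of $\gamma$ to $\gamma(x_0)$ is needed (the presence of $\nabla\gamma$ in $Z_k$, and equivalently the need for $(\gamma_0-\gamma)E_i^0$ to be $C^{1,\alpha}$-small) matches the paper's reasoning.
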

\begin{proof}
We denote $\gamma_0:=\gamma(x_0)$. We first construct solutions of the constant-coefficient problem by picking the functions $\{E^0_i\}_{1\leq i\leq 6}$ defined in \eqref{indep sol} and \eqref{choose add sol}. These solutions satisfy $\nabla\times\nabla\times E+\i\omega\mu_0\gamma_0 E=0$ and fulfill Hypothesis \ref{main hypo} and \ref{2 hypo} globally. Second, we look for solutions of the form,
\begin{align}
\nabla\times\nabla\times E_i^r+\i\omega\mu_0\gamma E_i^r=0 \quad \text{in}~ B_{r}, \quad \nu\times E_i^r=\nu\times E_i^0 \quad \text{on}~ \partial B_{r}, \quad 1\leq i \leq 6,
\end{align}
where $B_{r}$ is the ball centered at $x_0$ with $r$ to be chosen. Let $w=E_i^r-E_i^0$, 
\begin{align}
\nabla\times\nabla\times w+\i\omega\mu_0\gamma w=\i\omega\mu_0(\gamma_0-\gamma)E_i^0 \quad \text{in}~ B_{r}, \quad \nu\times w=0 \quad \text{on}~ \partial B_{r}.
\end{align}
By the smoothness of $\gamma$ as well as interior regularity results for elliptic equations, we deduce that,
\begin{align}\label{E 0}
\lim_{r\to \infty} \|E_i^r-E_i^0\|_{\C^{0,\alpha}(B_{r})}\leq C\lim_{r\to \infty}\|(\gamma_0-\gamma)E_i^0\|_{\C^{0,\alpha}(B_{r})} =0.
\end{align}
Thus we can fix $r$ sufficiently small such that $\|E_i^r-E_i^0\|_{\C^{0,\alpha}(B_{r})}\leq \epsilon$ for $\epsilon$ sufficiently small. Finally, by the Runge Approximation property, we claims that for every $\epsilon>0$ and $1\leq i\leq 6$, there exists $f_{\epsilon}\in TH_{\Div}^{\frac{1}{2}}(\partial X)$ such that the corresponding solution $E_i^{\epsilon}$ to \eqref{eq:Maxwell} satisfy,
\begin{align}\label{runge ineg}
\|E_i^{\epsilon}-E_i^r\|_{\C^{1,\alpha}(B_{r})}\leq \epsilon, \quad \text{where}~ \nu\times E_i^{\epsilon}=f_{\epsilon} \quad \text{on}~\partial X.
\end{align}
Combined with equation \eqref{E 0}, we deduce that,
\begin{align}
\|E_i^{\epsilon}-E_i^0\|_{\C^{0,\alpha}(B_{r})}\leq 2\epsilon.
\end{align}
By choosing a sufficiently small $\epsilon$, Hypothesis \ref{main hypo} obviously holds by continuity of  the determinant.

In addition, if $\gamma$ is in the $\C^{1,\alpha}$ vicinity of $\gamma_0$, we can choose a sufficiently small $r$, such that \eqref{E 0} holds in $\C^{1,\alpha}(B_r)$,
\begin{align}\label{E 1}
\|E_i^r-E_i^0\|_{\C^{1,\alpha}(B_{r})}\leq C\|(\gamma_0-\gamma)E_i^0\|_{\C^{1,\alpha}(B_{r})} \leq \epsilon
\end{align}
Then together with \eqref{runge ineg}, we derive the estimate as following,
 \begin{align}
\|E_i^{\epsilon}-E_i^0\|_{\C^{1,\alpha}(B_{r})}\leq 2\epsilon
\end{align}
Notice that the space $\mathcal{W}$ constructed in \eqref{W full} contains up to first derivatives of $E_i$. Again by choosing a sufficient small $\epsilon$, the full rank property of $\mathcal{W}$ in Hypothesis \ref{2 hypo} is satisfied by $\{E_i^{\epsilon}\}_{1\leq i\leq 6}$.
\end{proof}

\section*{Acknowledgment} This paper was partially funded by the NSF grant DMS-1108608.

\end{document}